\newtheorem{maintheorem}{Theorem}
\newtheorem{maincor}[maintheorem]{Corollary}
\newtheorem{theorem}{Theorem}[section]
\newtheorem{lemma}[theorem]{Lemma}
\newtheorem{corollary}[theorem]{Corollary}
\newtheorem{proposition}[theorem]{Proposition}
\theoremstyle{definition}
\newtheorem{definition}[theorem]{Definition}
\theoremstyle{remark}
\newtheorem{remark}[theorem]{Remark}
\newtheorem{example}[theorem]{Example}
\newtheorem{problem}[theorem]{Problem}
\newtheorem{conjecture}[theorem]{Conjecture}
\newtheorem*{ack}{Acknowledgements}
\newcommand{\RR}{\mathbb{R}}
\newcommand{\ZZ}{\mathbb{Z}}
\newcommand{\Haus}{\mathcal{H}}
\newcommand{\Pack}{\mathcal{P}}
\newcommand{\dist}{\mathop\mathrm{dist}\nolimits}
\newcommand{\diam}{\mathop\mathrm{diam}\nolimits}
\newcommand{\side}{\mathop\mathrm{side}\nolimits}
\newcommand{\spt}{\mathop\mathrm{spt}\nolimits}
\newcommand{\tJ}{\widetilde{J}}
\newcommand{\bhat}{\hat{\beta}}
\newcommand{\hJ}{\hat{J}}
\newcommand{\two}{I\!I}
\newcommand{\Lip}{\mathop\mathrm{Lip}\nolimits}
\newcommand{\lD}[1]{\mathop{\underline{D}^{#1}}\nolimits}
\newcommand{\uD}[1]{\mathop{\overline{D}^{\,#1}}\nolimits}
\newcommand{\allDelta}{\vec{\Delta}}
\newcommand{\cT}{\mathcal{T}}
\newcommand{\res}{\hbox{ {\vrule height .22cm}{\leaders\hrule\hskip.2cm} }} 
\numberwithin{equation}{section}
\numberwithin{figure}{section}
\numberwithin{table}{section}
\begin{document}

\title[Multiscale analysis of 1-rectifiable measures]{Multiscale analysis of 1-rectifiable measures: necessary conditions}
\author{Matthew Badger \and Raanan Schul}
\thanks{M.~ Badger was partially supported by an NSF postdoctoral fellowship DMS 12-03497.
R.~ Schul was partially supported by a fellowship from the  Alfred P. Sloan Foundation as well as  by NSF  DMS 11-00008.}
\date{September 15, 2014}
\subjclass[2010]{Primary 28A75}
\keywords{rectifiable measure, singular measure, Jones beta number, Jones function, Analyst's Traveling Salesman Theorem, Hausdorff density, Hausdorff measure and packing measure}
\address{Department of Mathematics\\ Stony Brook University\\ Stony Brook, NY 11794-3651}
\curraddr{Department of Mathematics\\ University of Connecticut\\ Storrs, CT 06269-3009}
\email{matthew.badger@uconn.edu}
\address{Department of Mathematics\\ Stony Brook University\\ Stony Brook, NY 11794-3651}
\email{schul@math.sunysb.edu}

\begin{abstract} We repurpose tools from the theory of \emph{quantitative rectifiability} to study the \emph{qualitative rectifiability} of measures in $\RR^n$, $n\geq 2$. To each locally finite Borel measure $\mu$, we associate a function $\tJ_2(\mu,x)$ which uses a weighted sum to record how closely the mass of $\mu$ is concentrated near a line in the triples of dyadic cubes containing $x$. We show that $\tJ_2(\mu,\cdot)<\infty$ $\mu$-a.e.~ is a necessary condition for $\mu$ to give full mass to a countable family of rectifiable curves. This confirms a conjecture of Peter Jones from 2000. A novelty of this result is that no assumption is made on the upper Hausdorff density of the measure. Thus we are able to analyze general 1-rectifiable measures, including measures which are singular with respect to 1-dimensional Hausdorff measure.\end{abstract}

\maketitle

\section{Introduction}\label{s:intro}

The aim of this article is to develop a multiscale analysis of 1-rectifiable measures. Because there exist competing conventions for the terminology ``rectifiable measure" (cf.~ \cite[pp.~ 251--252]{Federer} and \cite[p.~ 228]{Mattila}), we start by specifying its meaning in the present paper. See Table \ref{t:terms} for a guide between the different conventions.

\begin{definition}[Rectifiable measure] \label{d:rect} Let $\mu$ be a Borel measure on $\RR^n$ and let $m\geq 1$ be a positive integer. We say that $\mu$ is \emph{$m$-rectifiable} if there exist countably many bounded Borel sets $E_i\subset\RR^m$ and Lipschitz maps $f_i:E_i\rightarrow\RR^n$ such that the union of the images $f_i(E_i)$ have full measure, i.e.~ $\mu\left(\RR^n\setminus \bigcup_{i=1}^\infty f_i(E_i)\right)=0$.
\end{definition}

\begin{remark} We do not require an $m$-rectifiable measure $\mu$ to be absolutely continuous with respect to the $m$-dimensional Hausdorff measure $\Haus^m$ (see \S\ref{s:prelim}). If we wish to declare this as an additional property of the measure, then we shall explicitly write $\mu\ll \Haus^m$. \end{remark}

\begin{remark} An equivalent condition for a Borel measure $\mu$ on $\RR^n$ to be 1-rectifiable is that there exist countably many \emph{rectifiable curves} $\Gamma_i\subset\RR^n$ such that $\mu(\RR^n\setminus\bigcup_i\Gamma_i)=0$. (Indeed, any Lipschitz map $f:E\rightarrow\RR^n$, $E\subset\RR^m$ extends to a global Lipschitz map $F:\RR^m\rightarrow\RR^n$. Thus, when $m=1$, one may assume without loss of generality that the sets $E_i$ in Definition \ref{d:rect} are compact intervals $[a_i,b_i]$ and take $\Gamma_i=f_i([a_i,b_i])$.) \end{remark}

\begin{table}\small\caption{Cross-reference: rectifiable measures}\label{t:terms}\begin{tabular}{ccc}
  \toprule
  this paper & \cite{Federer} & \cite{Mattila} \\ \toprule
  $\mu$ is $m$-rectifiable & $\RR^n$ is countably $(\mu,m)$ rectifiable & --- \\ \midrule
  $\mu$ is $m$-rectifiable and $\mu(\RR^n)<\infty$ & $\RR^n$ is $(\mu,m)$ rectifiable & --- \\ \midrule
  $\mu$ is $m$-rectifiable and $\mu\ll \Haus^m$ & --- & $\mu$ is $m$-rectifiable \\ \bottomrule
\end{tabular}\end{table}

The qualitative theory of rectifiable sets and absolutely continuous rectifiable measures in Euclidean spaces developed across the last century, beginning with the seminal work of Besicovitch \cite{Bes28,Bes38} and later generalized and improved upon in a series of papers by Morse and Randolph \cite{MR}, Moore \cite{Moore}, Marstrand \cite{Marstrand}, Mattila \cite{Mattila75} and Preiss \cite{Preiss}. In particular, in the presence of absolute continuity, these investigations revealed a deep connection between the rectifiability of a measure and the asymptotic behavior of the measure on small balls.

\begin{definition}[Hausdorff densities] Let $B(x,r)$ denote the closed ball in $\RR^n$ with center $x\in\RR^n$ and radius $r>0$. For each positive integer $m\geq 1$, let $\omega_m=\Haus^m(B^m(0,1))$ denote the volume of the unit ball in $\RR^m$. For all locally finite Borel measures $\mu$ on $\RR^n$, we define the \emph{lower Hausdorff $m$-density} $\lD{m}(\mu,\cdot)$ and  \emph{upper Hausdorff $m$-density} $\uD{m}(\mu,\cdot)$ by \begin{equation*} \lD{m}(\mu,x):=\liminf_{r\rightarrow 0} \frac{\mu(B(x,r))}{\omega_mr^m}\in[0,\infty]\end{equation*} and \begin{equation*} \uD{m}(\mu,x):=\limsup_{r\rightarrow 0} \frac{\mu(B(x,r))}{\omega_mr^m}\in[0,\infty]\end{equation*} for all $x\in\RR^n$.
If $\lD{m}(\mu,x)=\uD{m}(\mu,x)$ for some $x\in\RR^n$, then we write $D^m(\mu,x)$ for the common value and call $D^m(\mu,x)$ the \emph{Hausdorff $m$-density of $\mu$ at $x$}.
\end{definition}

For $\mu$ a Borel measure and $E\subset\RR^n$ a Borel set, let $\mu\res E$ denote the \emph{restriction of $\mu$ to $E$}, i.e.~ the measure defined by the rule $(\mu\res E)(F)=\mu(E\cap F)$ for all Borel $F\subset\RR^n$.

\begin{theorem}[\cite{Mattila75}]\label{t:M} Let $1\leq m\leq n-1$. Suppose $E\subset\RR^n$ is Borel and $\mu=\Haus^m\res E$ is locally finite. Then $\mu$ is $m$-rectifiable if and only if the Hausdorff $m$-density of $\mu$ exists and $D^m(\mu,x)=1$ at $\mu$-a.e.~ $x\in\RR^n$.\end{theorem}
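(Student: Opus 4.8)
The plan is to prove the two implications separately; the forward direction --- if $\mu$ is $m$-rectifiable then $D^m(\mu,\cdot)=1$ $\mu$-a.e.\ --- is the soft one. I would first upgrade the Lipschitz images of Definition \ref{d:rect}: by Rademacher's theorem together with the Whitney extension theorem, every bounded Lipschitz image $f_i(E_i)\subset\RR^n$ agrees, outside an $\Haus^m$-null set, with a countable union of $C^1$ embedded $m$-dimensional submanifolds. Relabelling, we obtain countably many $C^1$ submanifolds $M_k$ with $\Haus^m(E\setminus\bigcup_k M_k)=0$. Now fix $k$ and consider an $x\in E\cap M_k$ that is simultaneously: (i) a point where $M_k$ has a classical tangent $m$-plane, so that $\Haus^m(M_k\cap B(x,r))/(\omega_m r^m)\to 1$; (ii) a Lebesgue density point of $E\cap M_k$ relative to $\Haus^m\res M_k$; and (iii) a point at which $\uD{m}(\Haus^m\res(E\setminus M_k),x)=0$, which holds for $\Haus^m$-a.e.\ $x\notin E\setminus M_k$ by the standard upper density estimate for sets of locally finite $\Haus^m$ measure. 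Almost every point of $E$ has these three properties for a suitable $k$, and for such $x$,
\begin{align*}
\frac{\mu(B(x,r))}{\omega_m r^m}
&=\frac{\Haus^m\big((E\cap M_k)\cap B(x,r)\big)}{\Haus^m\big(M_k\cap B(x,r)\big)}\cdot\frac{\Haus^m\big(M_k\cap B(x,r)\big)}{\omega_m r^m}\\
&\qquad{}+{}\frac{\Haus^m\big((E\setminus M_k)\cap B(x,r)\big)}{\omega_m r^m}\ \longrightarrow\ 1\cdot 1+0=1
\end{align*}
as $r\to 0$, using (ii), (i), and (iii) in turn; hence $D^m(\mu,\cdot)=1$ $\mu$-a.e.

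For the converse I would localize to assume $0<\Haus^m(E)<\infty$ and argue that $E$ has an approximate tangent $m$-plane at $\Haus^m$-a.e.\ point, after which the classical fact that a set of positive finite $\Haus^m$ measure with approximate tangent $m$-planes almost everywhere is $m$-rectifiable finishes the proof. To produce the tangent planes I would run a blow-up argument: at a generic $x\in E$, form the rescalings $\mu_{x,r}(A)=\Haus^m(E\cap(x+rA))/r^m$, extract weak-$*$ subsequential limits $\nu$ (tangent measures of $\mu$ at $x$), use the density hypothesis to normalize them, and show that $\nu$ must be $m$-flat, i.e.\ $\Haus^m$ restricted to an $m$-plane through the origin; flatness of all tangent measures then yields the approximate tangent plane. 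An alternative, and closer to the original line of argument, is to split $E=R\cup P$ into an $m$-rectifiable set $R$ and a purely $m$-unrectifiable set $P$ via the Besicovitch--Federer structure theorem, note that because $R$ and $P$ are disjoint of finite measure the density hypothesis forces $D^m(\Haus^m\res P,\cdot)=1$ at $\Haus^m$-a.e.\ point of $P$, and then contradict the Besicovitch--Federer projection theorem: almost every orthogonal projection of $P$ onto an $m$-plane is Lebesgue-null, whereas density $1$ forces $P$ to occupy a definite proportion of an $m$-disc at small scales, so a positive-measure set of projections cannot collapse $P$ --- hence $\Haus^m(P)=0$ and $E$ equals the $m$-rectifiable set $R$ up to a null set.

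The main obstacle is precisely the flatness step in the converse: passing from the one-parameter normalization $\mu(B(x,r))/(\omega_m r^m)\to1$ to genuine flatness of \emph{every} tangent measure (equivalently, to the existence of approximate tangent planes) requires control uniform across scales rather than along a single sequence $r\to0$, and this is where the real geometry enters --- via monotonicity-type estimates, Besicovitch's cone-and-covering arguments in the case $m=1$, or, in full generality, Preiss's analysis of measures of positive and finite density. The two structural inputs, the decomposition into rectifiable and purely unrectifiable parts and the projection theorem, I would use as black boxes; the forward direction, by contrast, is essentially bookkeeping once the $C^1$ decomposition of Lipschitz images is in hand.
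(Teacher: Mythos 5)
The paper does not prove Theorem \ref{t:M}: it is quoted from \cite{Mattila75} as classical background, so there is no internal argument to compare yours against; what can be assessed is whether your sketch would actually prove the theorem. Your forward direction is correct and standard: the $C^1$ decomposition of Lipschitz images (Rademacher plus Whitney extension), the exact density $1$ of $C^1$ submanifolds, the Lebesgue density theorem for $\Haus^m\res M_k$, and the fact that $\uD{m}(\Haus^m\res A,x)=0$ at $\Haus^m$-a.e.\ $x\notin A$ for $A$ of locally finite measure combine exactly as you write to give $D^m(\mu,\cdot)=1$ $\mu$-a.e.

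The converse, however, has a genuine gap at the one step you cannot outsource. In your second route you write that ``density $1$ forces $P$ to occupy a definite proportion of an $m$-disc at small scales, so a positive-measure set of projections cannot collapse $P$.'' That inference is false as a general principle: the four-corner Cantor set $K\subset\RR^2$ satisfies $cr\leq\Haus^1(K\cap B(x,r))\leq Cr$ for all $x\in K$ and small $r$ --- it occupies a definite measure-theoretic proportion of every small disc --- yet it is purely $1$-unrectifiable and almost every orthogonal projection of it is Lebesgue-null. So comparability of $\Haus^m(P\cap B(x,r))$ with $r^m$ in no way contradicts the Besicovitch--Federer projection theorem. What is needed is the full strength of the density being \emph{exactly} $1$, and converting that single-scalar normalization into geometric flatness (approximate tangent planes, or a lower bound on some projection of $P$) is precisely the content of Besicovitch's theorem that a purely $1$-unrectifiable set has lower density at most $3/4$ almost everywhere, and of Mattila's argument for $m\geq 2$. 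Your first route has the same issue, which you do acknowledge: showing that every tangent measure at a generic point is $m$-flat, given only $\mu(B(x,r))/(\omega_mr^m)\to1$, is the theorem, not a lemma one can cite around it (Preiss's machinery, which you invoke, postdates and subsumes Mattila's result rather than providing an elementary input to it). As written, then, the converse direction is a correct identification of which hard theorems would suffice, together with one bridging claim --- the projection-theorem contradiction --- that does not hold for the reason above; it is not yet a proof.
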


\begin{theorem}[\cite{Preiss}]\label{t:P} Let $1\leq m\leq n-1$. If $\mu$ is a locally finite Borel measure on $\RR^n$, then $\mu$ is $m$-rectifiable and $\mu\ll \Haus^m$  if and only if the Hausdorff $m$-density of $\mu$ exists and $0<D^m(\mu,x)<\infty$ at $\mu$-a.e.~ $x\in\RR^n$.\end{theorem}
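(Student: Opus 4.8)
The plan is to prove the two implications of Theorem~\ref{t:P} separately; the forward one is classical, while the reverse one is Preiss's theorem \cite{Preiss} and carries essentially all of the difficulty.

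\emph{Forward implication.} Assuming $\mu$ is $m$-rectifiable with $\mu\ll\Haus^m$, I would let $M=\bigcup_i\Gamma_i$ be a countable union of rectifiable curves (or Lipschitz images) with $\mu(\RR^n\setminus M)=0$, so that $M$ is an $m$-rectifiable set, and use the Radon--Nikodym theorem to write $\mu=\theta\,\Haus^m\res M$ with $0<\theta<\infty$ $\mu$-a.e. After a routine decomposition of $M$ into pieces of finite $\Haus^m$-measure, the classical density theorem for rectifiable sets gives $D^m(\Haus^m\res M,x)=1$ for $\Haus^m$-a.e.\ $x\in M$, hence for $\mu$-a.e.\ $x$; combined with Lebesgue differentiation of $\theta$ against $\Haus^m\res M$ this yields $D^m(\mu,x)=\theta(x)\in(0,\infty)$ for $\mu$-a.e.\ $x$.

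\emph{Reverse implication.} Now suppose $D^m(\mu,x)$ exists and lies in $(0,\infty)$ for $\mu$-a.e.\ $x$. The absolute continuity $\mu\ll\Haus^m$ should be routine: split $\RR^n$ (up to a $\mu$-null set) into the countably many Borel sets on which $\uD{m}(\mu,\cdot)\leq k$, $k\in\ZZ$; on each such set $E$ a standard covering comparison gives $\mu\res E\leq c_m k\,\Haus^m$, hence $\mu\ll\Haus^m$. The substantive claim is that $\mu$ is $m$-rectifiable, and here I would run Preiss's tangent-measure argument. Restricting to one of countably many pieces on which $0<c\leq D^m(\mu,\cdot)\leq C<\infty$, I would: (i) verify that for $\mu$-a.e.\ $x$ the tangent set $\mathrm{Tan}(\mu,x)$ --- weak-$*$ limits of the rescalings $r^{-m}(T_{x,r})_\#\mu$, $T_{x,r}(y)=(y-x)/r$ --- is nonempty and consists of nonzero Radon measures; (ii) use that the density is a genuine limit to see that at $\mu$-a.e.\ $x$ every $\nu\in\mathrm{Tan}(\mu,x)$ satisfies $\nu(B(0,r))=D^m(\mu,x)\,\omega_m r^m$ for all $r>0$, and then, invoking the principle that tangent measures of tangent measures at $\mu$-a.e.\ point are again tangent measures of $\mu$, extract an $m$-\emph{uniform} measure $\lambda\in\mathrm{Tan}(\mu,x)$, i.e.\ one with $\lambda(B(y,r))=D^m(\mu,x)\,\omega_m r^m$ for all $y\in\spt\lambda$ and $r>0$; (iii) bring in the structure theory of uniform measures --- the Kirchheim--Preiss theorem that their supports are real-analytic varieties, together with the rigidity statement that a uniform measure close enough (in a scale-invariant metric) to a flat measure $c\,\Haus^m\res V$, $V$ an $m$-plane through $0$, must itself be flat; (iv) combine (iii) with a connectedness argument along the dilation flow $r\mapsto\mu_{x,r}$ and with the fact that even non-flat uniform measures possess flat tangent measures at suitable points and at infinity, so as to upgrade (ii) to: for $\mu$-a.e.\ $x$, \emph{every} measure in $\mathrm{Tan}(\mu,x)$ is flat; and (v) conclude by the Marstrand--Mattila rectifiability criterion, namely that a measure with positive and finite upper and lower densities whose tangent measures are $\mu$-a.e.\ flat is $m$-rectifiable.

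The hard part will be steps (iii) and (iv): classifying uniform measures precisely enough to obtain rigidity near flat measures, and then exploiting connectedness of the cone of tangent measures to force flat tangent measures $\mu$-a.e. This rests on Preiss's delicate analysis of the moment functions $s\mapsto\int e^{-s|y-z|^2}\,d\nu(y)$ attached to a uniform measure $\nu$, whose asymptotic expansions as $s\to0^+$ simultaneously yield analyticity of $\spt\nu$ and quantitative control on its departure from a plane; implementing the ``open and closed on scales'' dichotomy in step (iv) is where I expect one must reproduce Preiss's original argument more or less in full, as no genuine shortcut is known.
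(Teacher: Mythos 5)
This is Preiss's theorem, which the paper states as cited background from \cite{Preiss} and does not prove, so there is no in-paper argument to measure your proposal against. Taken as a roadmap, your outline is accurate: the forward implication follows the classical route (decompose the rectifiable carrier $M$ into pieces of finite $\Haus^m$-measure, apply the density theorem for rectifiable sets and Besicovitch differentiation of the Radon--Nikodym derivative $\theta$ against $\Haus^m\res M$), the absolute continuity claim in the reverse implication is exactly the elementary covering argument behind Lemma~\ref{l:UD}, and the chain tangent measures $\to$ uniform tangent measures $\to$ flat tangent measures $\mu$-a.e.\ $\to$ Marstrand--Mattila criterion is the correct architecture of Preiss's proof.

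As a proof, however, the proposal has an acknowledged hole exactly where the theorem lives: steps (iii) and (iv) --- the rigidity statement that a uniform measure sufficiently close to a flat one is flat, and the connectedness argument along the dilation flow that upgrades ``some flat tangent measure'' to ``every tangent measure is flat'' --- constitute essentially the entire content of \cite{Preiss}, and you defer them wholesale. Two smaller points also need care if you were to write this out. First, in step (ii) the passage from ``$\nu(B(0,r))=D^m(\mu,x)\,\omega_m r^m$ for every tangent measure $\nu$ at $x$'' to the existence of a genuinely $m$-uniform tangent measure (uniform around \emph{every} point of its support) rests on the locality principle that tangent measures of tangent measures are again tangent measures; this holds only at $\mu$-a.e.\ $x$ and itself requires proof. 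Second, the Kirchheim--Preiss real-analyticity of supports of uniform measures postdates \cite{Preiss} and is not what the original argument uses; Preiss works directly with the asymptotics of the moment functions, and analyticity of the support does not by itself deliver the quantitative flatness rigidity needed in (iii). Finally, the Marstrand--Mattila criterion in (v), in the form you state it for measures rather than sets, is itself a nontrivial theorem (see \cite[Theorem 16.7]{Mattila}).
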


There exist additional characterizations of absolutely continuous rectifiable measures (e.g.~ in terms of the tangent measures of $\mu$). For a full survey, we refer the reader to the book \cite{Mattila} by Mattila.

In general, $m$-rectifiable measures on $\RR^n$ are not necessarily absolutely continuous with respect to Hausdorff measure $\Haus^m$. In the case $m=1$, an interesting family of singular 1-rectifiable measures was recently identified by Garnett, Killip and Schul \cite{GKS}.

\begin{example}[\cite{GKS}] \label{e:GKS} Let $h:\RR\rightarrow\RR$ be the 1-periodic function defined by \begin{equation*} h(x)=\left\{\begin{array}{ll} 2 &\text{if }x\in[\tfrac13,\tfrac23)+\mathbb{Z}\\ -1 &\text{otherwise}\end{array}\right.\!\!.\end{equation*} Given $0<\delta\leq 1/3$, define the measure $\eta$ on $\RR$ to be the weak-$*$ limit of measures $\eta_k$,\begin{equation*} d\eta_k:=\prod
_{j=0}^{k-1} [1+ (1-3\delta) h(3^jx)]\,dx.\end{equation*} Finally, assign $\mu=\eta\times\dots\times\eta$ to be the $n$-fold product of $\eta$. When $\delta=1/3$, the measure $\mu$ is Lebesgue measure on $\RR^n$. On the other hand, Garnett, Killip and Schul proved that if $\delta\leq \delta_n<1/3$ is sufficiently small, then $\mu$ is 1-rectifiable in the sense of Definition \ref{d:rect}
(see Theorem 1.1 and the paragraph ``In closing..." on p.~ 1678 of \cite{GKS}).
Therefore, for all $n\geq 2$, there exist locally finite Borel measures $\mu$ on $\RR^n$ with the following properties.
\begin{itemize}
 \item The support of $\mu$ is $\RR^n$: $\mu(B(x,r))>0$ for all $x\in\RR^n$ and for all $r>0$.
 \item The measure $\mu$ is \emph{doubling}: there is $C>1$ such that $\mu(B(x,2r))\leq C\mu(B(x,r))$ for all $x$ in the support of $\mu$ and for all $r>0$.
 \item Every Lipschitz graph (i.e.~ a set which up to an isometry of $\RR^n$ has the form $\{(x,f(x)):f:\RR^{m}\rightarrow\RR^{n-m}\text{ is Lipschitz}\}$ for $1\leq m\leq n-1$) has $\mu$ measure zero.
 \item For every connected Borel set $\Gamma\subset\RR^n$, the set $\{x\in \Gamma: \lD1(\Haus^1\res \Gamma,x)<\infty\}$ has $\mu$ measure zero.
 \item The measure $\mu$ is 1-rectifiable.
 \item The measure $\mu$ is singular with respect to  $\Haus^1$.
\end{itemize}
\end{example}

In our opinion, general rectifiable measures that are allowed to be singular with respect to Hausdorff measure are currently poorly understood. We believe that the following open problem represents a major challenge in geometric measure theory.

\begin{problem}\label{p:singular} For all $1\leq m\leq n-1$, find necessary and sufficient conditions in order for a locally finite Borel measure $\mu$ on $\RR^n$ to be $m$-rectifiable. (Do not assume that $\mu\ll \Haus^m$.)\end{problem}

In this paper, we adapt tools from the theory of quantitative rectifiability to attack Problem \ref{p:singular} in the case $m=1$. In particular, we establish new necessary conditions for a locally finite Borel measure $\mu$ on $\RR^n$ to be 1-rectifiable. To state these results, we need to introduce two concepts --- \emph{$L^2$ beta numbers} and \emph{$L^2$ Jones functions} --- which emanate from \cite{Jones-TST,DS91,DS93,BJ} (also see  \cite{Pajot96,Pajot97,Leger,Lerman,DT}).

For all $E\subset\RR^n$ and $x\in\RR^n$, let $\diam E:=\sup_{y,z\in E}|y-z|$ and $\dist(x,E):=\inf_{z\in F}|x-z|$ denote respectively the \emph{diameter of $E$} and the \emph{distance of $x$ to $E$}.

\begin{definition}[$L^2$ beta numbers] For every locally finite Borel measure $\mu$ on $\RR^n$ and every bounded Borel set $Q\subset\RR^n$ (typically we take $Q$ to be a cube), define  $\beta_2(\mu,Q)$ by
\begin{equation} \label{e:beta-2}
   \beta_2^2(\mu,Q):=\inf_\ell \int_Q \left(\frac{\dist(x,\ell)}{\diam Q}\right)^2 \frac{d\mu(x)}{\mu(Q)}\in[0,1],
\end{equation}
where $\ell$ in the infimum ranges over all lines in $\RR^n$. If $\mu(Q)=0$, then we interpret (\ref{e:beta-2}) as $\beta_2(\mu,Q)=0$.\end{definition}

The beta number $\beta_2(\mu,Q)$ records how well $\mu\res Q$ is fit by a linear regression model, in an $L^2$ sense. At one extreme, $\beta_2(\mu,Q)=0$ if and only if $\mu\res Q=\mu\res(Q\cap \ell_0)$ for some line $\ell_0$ in $\RR^n$. At the other extreme, $\beta_2(\mu,Q)\sim 1$ when the mass of $\mu\res Q$ is ``scattered" in the sense that $\mu\res Q$ assigns non-negligible mass ``far away" from every line passing through $Q$.

One may think of several natural ways to ``add up" the errors $\beta_2(\mu,Q)$ at ``all scales". Below we focus on two variations. Let $\Delta(\RR^n)$ denote the \emph{standard dyadic grid}; that is, the collection of all (closed) dyadic cubes in $\RR^n$. Also, for each cube $Q$ and $\lambda>0$, let $\lambda Q$ denote the concentric cube about $Q$ that is obtained by dilating $Q$ by a factor of $\lambda$.

\begin{definition}[$L^2$ Jones functions] Let $\mu$ be a locally finite Borel measure on $\RR^n$ and let $r>0$. The \emph{ordinary $L^2$ Jones function $J_2(\mu,r,\cdot)$ for $\mu$} is defined by \begin{equation*} J_2(\mu,r,x):=\sum_{Q}\beta_2^2(\mu,3Q)\chi_Q(x)\in[0,\infty]\end{equation*} for all $x\in\RR^n$,
where $Q$ ranges over all $Q\in\Delta(\RR^n)$ with side length at most $r$. We abbreviate the function $J_2(\mu,1,\cdot)$ starting at scale $1$ by $J_2(\mu,\cdot)$.

The \emph{density-normalized $L^2$ Jones function $\tJ_2(\mu,r,\cdot)$ for $\mu$} is defined by
\begin{equation}
\label{e:dnJones} \tJ_2(\mu,r,x):=\sum_{Q}\beta_2^2(\mu,3Q)\frac{\diam Q}{\mu(Q)}\,\chi_Q(x)\in[0,\infty]
\end{equation}
for all $x\in\RR^n$, where $Q$ ranges over all $Q\in\Delta(\RR^n)$ with side length at most $r$. (Here we take $0/0=0$.) We abbreviate the function $\tJ_2(\mu,1,\cdot)$ starting at scale 1 by $\tJ_2(\mu,\cdot)$.
\end{definition}

The ordinary $L^2$ Jones function $J_2(\mu,\cdot)$ has been used by several authors to study the rectifiability of absolutely continuous measures of the form $\mu=\Haus^1\res E$, $E\subset\RR^n$. Although we formulate Example \ref{e:ds}, Theorem \ref{t:p1} and Theorem \ref{t:p2} for 1-rectifiable measures only, analogous statements for $m$-rectifiable measures exist for all $m\geq 2$.

\begin{example}[\cite{DS91}] \label{e:ds} Let $E\subset\RR^n$ be Borel and 1-Ahlfors regular, i.e.~ suppose that there exist constants $c_1,c_2>0$ such that $c_1r\leq \Haus^1(E\cap B(x,r))\leq c_2r$ for all $x\in E$  and for all $0<r<\diam E$. If $\mu=\Haus^1\res E$ is \emph{1-uniformly rectifiable} in the sense of David and Semmes \cite{DS91,DS93}, then there exists a constant $C=C(\mu)>0$ such that $\int_Q J_2(\mu,\diam Q,\cdot)\, d\mu\leq C\diam Q$ for every dyadic cube $Q$. In particular, $J_2(\mu,\cdot)<\infty$ $\mu$-a.e.\end{example}

\begin{theorem}[{\cite[Theorem 1.1]{Pajot97}}] \label{t:p1} Suppose $K\subset\RR^n$ is compact and $\mu=\Haus^1\res K$ is finite. If both $\lD{1}(\mu,x)>0$ and $J_2(\mu,x)<\infty$ at $\mu$-a.e.~ $x\in\RR^n$, then $\mu$ is 1-rectifiable.
\end{theorem}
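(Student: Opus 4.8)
The plan is to deduce Theorem~\ref{t:p1}, after a sequence of routine decompositions, from the sufficiency half of the $L^2$ Analyst's Traveling Salesman Theorem for Ahlfors regular sets, whose geometric core is a curve construction from $L^2$ flatness data. Recall that $1$-rectifiability of a measure is unchanged by discarding a null set and is inherited from countable partitions, and that (equivalently to Definition~\ref{d:rect} with $m=1$) it means $\mu$ gives full mass to a countable union of rectifiable curves. Recall also the standard density bound $\uD1(\mu,x)\le 1$ for $\mu$-a.e.\ $x$, valid because $\mu=\Haus^1\res K$ with $\Haus^1(K)<\infty$ (see \cite{Mattila}). Using this together with the hypotheses $\lD1(\mu,x)>0$ and $J_2(\mu,x)<\infty$, and making standard measurable partitions of $K$ according to a lower bound for $\lD1(\mu,x)$, the scale below which both that bound and the estimate $\mu(B(x,r))\le 2r$ hold, and an upper bound for $J_2(\mu,x)$, we may write $K$ up to a $\mu$-null set as a countable union of compact pieces $E$, each with constants $A>1$ and $r_0\in(0,1]$ such that
\[
 A^{-1}r\le\mu(B(x,r))\le Ar\quad\text{and}\quad J_2(\mu,x)\le A \qquad(x\in E,\ 0<r\le r_0).
\]
A further partition using the Besicovitch differentiation theorem applied to $\Haus^1\res E\ll\mu$ --- so that $(\Haus^1\res E)(B(x,r))/\mu(B(x,r))\to 1$ at $\Haus^1\res E$-a.e.\ $x$ --- also yields $A^{-1}r\le\Haus^1(E\cap B(x,r))$ for $x\in E$, $0<r\le r_0$; thus $\Haus^1\res E$ is Ahlfors $1$-regular and it suffices to cover $\Haus^1$-almost all of each such $E$ by rectifiable curves.

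Fix a piece $E$. Since $\Haus^1\res E\le\mu$ and the inclusions $B(x_Q,\diam Q)\subseteq 3Q\subseteq B(x_Q,2\diam Q)$ (for $x_Q\in E\cap Q$) combined with the regularity bounds give $\mu(3Q)\sim_A\diam Q\sim_A\Haus^1(E\cap 3Q)$ for every dyadic $Q$ meeting $E$ with $\diam Q\le r_0$, dividing in \eqref{e:beta-2} yields $\beta_2^2(\Haus^1\res E,3Q)\lesssim_A\beta_2^2(\mu,3Q)$ for all such $Q$. Next I would force the error to be small on average below a fixed scale: because $J_2(\mu,r,x)$ decreases to $0$ as $r\to 0$ for each $x\in E$ and $J_2(\mu,r,x)\le A$ there, dominated convergence supplies $\rho\in(0,r_0]$ with $\int_E J_2(\mu,\rho,\cdot)\,d\Haus^1\le\varepsilon_0\,\Haus^1(E)$, where $\varepsilon_0$ is a small absolute constant to be fixed later. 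Expanding by Fubini and using Ahlfors regularity to replace $\Haus^1(E\cap Q)$ by $\diam Q$ on the relevant cubes (a standard bounded-overlap argument handling the dilation factors and the cubes that merely clip $E$) gives the local $L^2$ traveling salesman estimate
\[
 \sum_{\substack{Q\cap E\neq\emptyset\\ \diam Q\le\rho}}\beta_2^2(\Haus^1\res E,3Q)\,\diam Q\ \lesssim_A\ \varepsilon_0\,\Haus^1(E).
\]

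The heart of the argument --- and the step I expect to be the genuine obstacle --- is the curve construction: a compact Ahlfors $1$-regular set $E$ whose $L^2$ beta-squares sum against $\diam Q$ below some scale, with small total, is rectifiable and, near each of its points, contained in a Lipschitz graph. The difficulty, absent from the classical $\beta_\infty$ theorem, is that $L^2$ flatness of $3Q$ only places a large $\mu$-fraction of $E\cap 3Q$ near the best line $\ell_Q$, so there is no pointwise estimate $\dist(y,\ell_Q)\lesssim\beta_2(\mu,3Q)\diam Q$ to telescope, and naively summing $\beta_2(\mu,3Q)$ over scales need not converge even when summing $\beta_2^2(\mu,3Q)$ does. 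The standard way through is a stopping-time (corona) decomposition of the dyadic cubes into coherent families on which a single line $\ell$ stays $\varepsilon_0$-accurate; on each family, Ahlfors regularity upgrades the $L^2$ estimate to the statement that $E$ lies in a thin tube about $\ell$ and --- crucially --- that best lines at comparable scales and nearby locations make small angles, because two lines each capturing a $1$-regular portion of $E$ in a common ball must be close. One then splices these nearly-straight pieces into a curve and bounds its length by $\diam E+\sum\beta_2^2(\Haus^1\res E,3Q)\diam Q$ through a Pythagorean ``excess length'' estimate at the cubes where the line is switched; here it is square-summability, not mere summability, that keeps the length finite, and the estimate of the previous paragraph that makes it small enough to yield a Lipschitz graph through a fixed $x_0\in E$. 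Carrying this out following the constructions of \cite{Jones-TST,DS91,DS93,Pajot96,Pajot97}, and then observing that countably many such graphs cover $\Haus^1$-almost all of $E$, hence of $K$, hence $\mu$-almost all of $\RR^n$, completes the proof that $\mu$ is $1$-rectifiable.
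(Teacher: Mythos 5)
First, a point of orientation: the paper contains no proof of this statement. Theorem~\ref{t:p1} is quoted from Pajot \cite{Pajot97} as background; the paper's own contribution is the \emph{converse} direction (Theorem~\ref{t:A} and Corollaries~\ref{c:B}, \ref{c:C}). So there is no ``paper's proof'' to compare against, and your proposal has to be judged as a free-standing argument for Pajot's sufficiency theorem.

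Your reductions are the standard and correct first half of such an argument: the a.e.\ bound $\uD1(\mu,\cdot)\le 1$ for $\Haus^1$ restricted to a set of finite measure, the decomposition into pieces $E$ with uniform two-sided density bounds below a scale $r_0$, the Besicovitch density-point argument to pass the lower bound from $\mu=\Haus^1\res K$ to $\Haus^1\res E$, the comparison $\beta_2(\Haus^1\res E,3Q)\lesssim_A\beta_2(\mu,3Q)$ via the mass ratio, and the dominated-convergence/Fubini step producing a scale $\rho$ below which the local $L^2$ traveling salesman sum is at most $\varepsilon_0\,\Haus^1(E)$. (Minor technical debts remain here --- the pieces $E$ are only regular below scale $r_0$ and after discarding a null set, so one must either embed them in genuinely Ahlfors regular sets or use a local version of the machinery, and the ``cubes that merely clip $E$'' issue you wave at does require a neighbor-cube/bounded-overlap argument --- but these are routine.) The problem is the step you yourself flag as ``the genuine obstacle'': the passage from a small $L^2$ beta sum on an Ahlfors regular set to a covering by rectifiable curves or Lipschitz graphs. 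What you offer there is a description of the corona strategy --- stopping-time regions, angle control between best-fit lines via Chebyshev plus lower regularity, splicing, and a Pythagorean length estimate --- followed by a deferral to \cite{Jones-TST,DS91,DS93,Pajot96,Pajot97}. That step is not a lemma you are allowed to quote while proving this theorem: it \emph{is} the theorem (it is exactly the content of \cite[Theorem 1.1]{Pajot97} and of the David--Semmes construction it rests on), and none of its genuinely hard estimates are carried out. As written, the proposal is therefore a correct reduction of the statement to itself plus a proof-by-reference, not a proof; to close the gap you would need to actually execute the stopping-time construction and the length bookkeeping, which is the bulk of Pajot's paper.
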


\begin{theorem}[{\cite[Theorem 1.2]{Pajot97}}] \label{t:p2} If $K\subset\RR^n$ is compact and 1-Ahlfors regular, then $\mu=\Haus^1\res K$ is 1-rectifiable if and only if $J_2(\mu,\cdot)<\infty$ $\mu$-a.e.\end{theorem}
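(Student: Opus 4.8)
We sketch the proof; throughout $\mu=\Haus^1\res K$, with $K\subset\RR^n$ compact and $1$-Ahlfors regular, say $c_1r\le\Haus^1(K\cap B(x,r))\le c_2r$ for $x\in K$ and $0<r<\diam K$. The sufficient direction is immediate from Theorem~\ref{t:p1}: Ahlfors regularity makes $\mu$ finite (as $\Haus^1(K)\le c_2\diam K$) and forces $\lD1(\mu,x)\ge c_1/\omega_1=c_1/2>0$ for \emph{every} $x\in K$, so if in addition $J_2(\mu,\cdot)<\infty$ $\mu$-a.e.\ then the hypotheses of Theorem~\ref{t:p1} hold and $\mu$ is $1$-rectifiable. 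For the necessary direction, assume $\mu$ is $1$-rectifiable; then $K$ is a $1$-rectifiable \emph{set}, so by the structure theorem for rectifiable sets we may write $K=N\cup\bigcup_i K_i$ with $\mu(N)=0$ and with the $K_i$ pairwise disjoint bounded Borel sets, each contained in a Lipschitz graph $G_i$ over a line in $\RR^n$ (Lipschitz constant $\le 1$, say); each such $G_i$ is itself $1$-Ahlfors regular and satisfies $\Haus^1(G_i\cap Q)\lesssim\diam Q$. It suffices to prove $J_2(\mu,\cdot)<\infty$ at $\Haus^1$-a.e.\ point of each $K_i$. Fix $i$, and put $\nu_i=\Haus^1\res(G_i\cap B(0,R))$ with $R$ chosen so that $K\subset B(0,R)$, so that $G_i\cap B(0,R)$ is a rectifiable curve.

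The plan is, for a dyadic cube $Q$ with $x\in Q$, $Q\cap K_i\ne\emptyset$, and $\side Q\le 1$ small, to test the infimum defining $\beta_2(\mu,3Q)$ against a near-optimal line $\ell_Q$ for $\nu_i$ on $3Q$, split the integral over $K\cap 3Q$ into its part on $G_i$ and its part on $K\setminus G_i$, and use $\mu(3Q)\approx\nu_i(3Q)\approx\diam Q$ (Ahlfors regularity, valid since $x\in K\cap G_i$) to get $\beta_2^2(\mu,3Q)\lesssim\beta_2^2(\nu_i,3Q)+(\diam Q)^{-3}\int_{(K\setminus G_i)\cap 3Q}\dist(y,\ell_Q)^2\,d\Haus^1(y)$; summing over such $Q\ni x$ yields $J_2(\mu,x)\lesssim J_2(\nu_i,x)+(\text{sum of cross terms})$. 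The term $J_2(\nu_i,x)$ is harmless: writing $\beta_E^\infty(3Q)=(\diam 3Q)^{-1}\inf_\ell\sup_{y\in E\cap 3Q}\dist(y,\ell)$, one has $\beta_2(\nu_i,3Q)\le\beta^\infty_{G_i\cap B(0,R)}(3Q)$, and since $G_i\cap B(0,R)$ is connected of finite length the Analyst's Traveling Salesman Theorem gives $\sum_Q\beta^\infty_{G_i\cap B(0,R)}(3Q)^2\diam Q\lesssim R$; combined with $\nu_i(Q)\lesssim\diam Q$ and Tonelli this gives $\int J_2(\nu_i,\cdot)\,d\nu_i\lesssim R<\infty$, so $J_2(\nu_i,\cdot)<\infty$ $\nu_i$-a.e., hence $\Haus^1$-a.e.\ on $K_i$.

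The hard part will be controlling the sum of cross terms for $\Haus^1$-a.e.\ $x\in K_i$. At such $x$ one has three pieces of a.e.\ information: (i) $K_i$ has density $1$ in $K$ at $x$, so $\Haus^1((K\setminus G_i)\cap B(x,r))=o(r)$; (ii) $K$ has an approximate tangent line $\ell_x$ at $x$, which by (i) coincides with the tangent of $G_i$ at $x$, so the part of $K\cap B(x,r)$ lying outside any fixed cone about $\ell_x$ has $\Haus^1$-measure $o(r)$; and (iii) the lines $\ell_Q$ at $x$ converge to $\ell_x$ through the dyadic scales. The obstacle is genuine: estimating $\dist(y,\ell_Q)\lesssim\diam Q$ crudely on all of $(K\setminus G_i)\cap 3Q$ only yields $\sum_Q\Haus^1((K\setminus G_i)\cap 3Q)/\mu(3Q)$, which can diverge even though each summand tends to $0$. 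One must instead exploit that the portion of $K\setminus G_i$ contributing nontrivially to $\beta_2(\mu,3Q)$ is the portion \emph{far} from $\ell_x$ — quantified by (ii) — while the portion close to $\ell_x$ carries a small factor $(\dist(\cdot,\ell_x)/\diam Q)^2$ by (iii); but to make the resulting series converge the pointwise facts (i)--(iii) do not suffice. The resolution I would pursue is to organize the dyadic scales along the branch through $x$ into coherent stopping-time regions and run a Carleson-type estimate in the spirit of the corona decomposition of David--Semmes, the essential difference from the uniformly rectifiable case being that one does \emph{not} need a packing bound on the tops of the stopping regions, only that $\mu$-a.e.\ $x$ is eventually absorbed into them, so that the errors accumulated along its branch form a convergent series. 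Together with the bound on $J_2(\nu_i,\cdot)$ this gives $J_2(\mu,\cdot)<\infty$ $\Haus^1$-a.e.\ on $K_i$, and summing over $i$ completes the proof.
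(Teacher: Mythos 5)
Your sufficiency direction is fine and is the intended one: upper regularity makes $\mu$ finite, lower regularity forces $\lD1(\mu,\cdot)\geq c_1/2>0$ everywhere on $K$, and Theorem~\ref{t:p1} applies. The problem is the necessity direction, where your argument has a genuine gap that you yourself flag: after splitting $K$ into pieces $K_i$ carried by Lipschitz graphs $G_i$ and bounding $J_2(\nu_i,\cdot)$ via the Traveling Salesman Theorem (that part is correct), you are left with the cross term coming from the mass of $K\setminus G_i$ inside $3Q$, and you correctly observe that the pointwise facts (i)--(iii) (density one, approximate tangents, convergence of best-fit lines) only give that each summand is $o(1)$, which does not make the series over scales converge. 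Your proposed ``resolution'' --- coherent stopping-time regions and a corona-type Carleson estimate --- is a program, not a proof; no stopping-time construction is given, and it is exactly at this point that the actual argument requires a new quantitative input. So as written the proof of the only-if direction is incomplete.

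For comparison, the paper obtains necessity as an instance of Corollary~\ref{c:B} (hence of Theorem~\ref{t:A} and Proposition~\ref{prop1L}), and the mechanism that replaces your missing corona estimate is much more elementary and is integrated rather than pointwise. For each curve $\Gamma=\Gamma_i$ one splits the dyadic cubes according to whether $\beta_\Gamma(aQ)\geq\varepsilon\beta_2(\mu,3Q)$ or not. The first family is summable by Corollary~\ref{c:tst} applied to $\Gamma$, contributing $\lesssim\Haus^1(\Gamma)$. For the second family, the near-optimal line for $\Gamma$ in $aQ$ captures all but half of $\beta_2^2(\mu,3Q)$, so the remainder is controlled by $\int_{3Q}(\dist(x,\Gamma)/\diam 3Q)^2\,d\mu$; decomposing $\RR^n\setminus\Gamma$ into Whitney cubes $T$ with $\dist(T,\Gamma)\sim\diam T$ and using the lower regularity $\mu(3Q)\gtrsim\diam Q$ to normalize, each $T$ contributes to a given scale with weight $4^{-(k-k_0)}\mu(T)$, and summing the geometric series over scales gives a total contribution $\lesssim\mu(\RR^n\setminus\Gamma)$. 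This is the Carleson-type estimate you were looking for: it is a global $L^1(\mu)$ bound on $\tJ_2$ (equivalently on $J_2$, since upper regularity gives $\uD1(\mu,\cdot)<\infty$) over the set where the lower density is quantitatively bounded below, with no tangent lines, no density points, and no corona decomposition. You should either carry out your stopping-time scheme in full or replace the second half of your argument with an estimate of this kind.
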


By contrast, for singular rectifiable measures, $J_2(\mu,\cdot)$ can be badly behaved.

\begin{example} \label{e:GKS2} Let $\mu$ be a measure from Example \ref{e:GKS} with defining parameter $\delta\leq \delta_n$. Since $\beta_2(\mu,3Q)\sim 1$ for every dyadic cube $Q$, the ordinary Jones function $J_2(\mu,\cdot)=\infty$ $\mu$-a.e. Nevertheless, it follows from the estimates in \cite{GKS} or by Theorem \ref{t:A} below that the density-normalized Jones function $\tJ_2(\mu,\cdot)<\infty$ $\mu$-a.e.\end{example}

In 2000, Peter Jones conjectured that weighted $L^2$ Jones functions should lead to a solution of Problem \ref{p:singular} (private communication). Nam-Gyu Kang obtained unpublished results about this conjecture for measures supported on the four-corner Cantor set in $\RR^2$ (private communication). The general case is more complicated, as is evident by the measures in Example \ref{e:GKS}.

We are now ready to state our main results. Theorem \ref{t:A} confirms Jones' conjecture by connecting  the rectifiability of a measure to the pointwise behavior of its density-normalized $L^2$ Jones function. Corollary \ref{c:B} shows that for absolutely continuous measures, rectifiability of a measure also controls its ordinary $L^2$ Jones function.

\begin{maintheorem}\label{t:A} Let $\mu$ be a locally finite Borel measure on $\RR^n$. If $\mu$ is 1-rectifiable, then $\tJ_2(\mu,x)<\infty$ at $\mu$-a.e.~ $x\in\RR^n$.
\end{maintheorem}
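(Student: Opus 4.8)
The plan is to reduce to a measure carried by a single compact rectifiable curve and then combine a trivial comparison with the classical ($L^\infty$) Analyst's Traveling Salesman Theorem. Since $\mu$ is locally finite and the conclusion is local, we may assume $\mu$ is finite with compact support. Write $\mu=\sum_i\mu\res A_i$ where the $A_i$ are pairwise disjoint Borel sets, each contained in a compact rectifiable curve $\Gamma_i=f_i([0,1])$ with $f_i$ Lipschitz (possible by Definition~\ref{d:rect} and the remarks following it). It then suffices, for each fixed $i$, to prove $\tJ_2(\mu,x)<\infty$ for $(\mu\res A_i)$-a.e.\ $x$: the exceptional sets $\{x\in A_i:\tJ_2(\mu,x)=\infty\}$ can be discarded one at a time, and $\bigcup_i A_i$ carries $\mu$.

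The clean ingredient is the single-curve estimate. If a finite measure $\nu$ is carried by a compact rectifiable curve $\Gamma$, then for any cube $Q$ with $\nu(3Q)>0$, letting $\ell$ realize $\beta_\infty(\Gamma,3Q):=\inf_L\sup\{\dist(y,L)/\diam 3Q:\ y\in\Gamma\cap 3Q\}$ and testing the definition of $\beta_2(\nu,3Q)$ against $\ell$ (legitimate since $\nu\res 3Q$ is carried by $\Gamma\cap 3Q$) gives $\beta_2(\nu,3Q)\le\beta_\infty(\Gamma,3Q)$. Discarding the $\nu$-null set of points that lie in a cube $Q$ with $\nu(Q)=0$, Tonelli yields
\begin{equation*}
\int\tJ_2(\nu,x)\,d\nu(x)=\sum_{\nu(Q)>0}\beta_2^2(\nu,3Q)\,\diam Q\ \le\ \sum_{Q\in\Delta(\RR^n)}\beta_\infty^2(\Gamma,3Q)\,\diam Q\ \lesssim_n\ \Haus^1(\Gamma)+\diam\Gamma<\infty,
\end{equation*}
the last bound being the easy half (lower bound on length) of the Analyst's Traveling Salesman Theorem for the continuum $\Gamma$ (\cite{Jones-TST} and its $\RR^n$ generalization). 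Hence $\tJ_2(\nu,\cdot)\in L^1(\nu)$, and in particular $\tJ_2(\nu,\cdot)<\infty$ $\nu$-a.e.

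The obstacle is that $\tJ_2$ is assembled from $\beta_2(\mu,3Q)$ for the \emph{whole} measure, and a cube $3Q$ may carry foreign mass from the curves $\Gamma_j$, $j\ne i$, inflating $\beta_2(\mu,3Q)$ far above $\beta_\infty(\Gamma_i,3Q)$. Write $\mu=\nu+\sigma$ with $\nu=\mu\res\Gamma_i$ and $\sigma=\mu\res(\RR^n\setminus\Gamma_i)$; note $\sigma\perp\mu\res A_i$. For $(\mu\res A_i)$-a.e.\ $x$ and every dyadic $Q\ni x$ one has $\nu(3Q)\ge(\mu\res A_i)(3Q)>0$, so the minimizing line $\ell$ for $\beta_\infty(\Gamma_i,3Q)$ is available; testing $\beta_2(\mu,3Q)$ against $\ell$ and using $\dist(y,\ell)\le\dist(y,\Gamma_i\cap 3Q)+\beta_\infty(\Gamma_i,3Q)\diam 3Q$ gives
\begin{equation*}
\beta_2^2(\mu,3Q)\ \lesssim\ \beta_\infty^2(\Gamma_i,3Q)\ +\ \frac{1}{\mu(3Q)\,(\diam 3Q)^2}\int_{3Q}\dist(y,\Gamma_i\cap 3Q)^2\,d\sigma(y).
\end{equation*}
After multiplying by $\diam Q/\mu(Q)$ and summing over $Q\ni x$, the first term is handled as in the single-curve estimate (it has finite $\nu$-integral, and $\mu\res A_i\le\nu$). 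The heart of the matter is the cross term. The crude bound $\dist(y,\Gamma_i\cap 3Q)\le\diam 3Q$ reduces it to $\sum_{Q\ni x}\tfrac{\sigma(3Q)}{\mu(3Q)}\tfrac{\diam Q}{\mu(Q)}$, which is genuinely too lossy: this already diverges, e.g.\ when $\mu\res\Gamma_i\sim\Haus^1\res\Gamma_i$ and $\sigma$ is spread on curves clustering toward $\Gamma_i$, whereas the true $\beta_2(\mu,3Q)$ --- which only feels the \emph{transverse} second moment of the foreign mass --- stays summable. So the proof must keep the transverse moment and use two facts at once: (i) $\sigma\perp\mu\res A_i$, so at $(\mu\res A_i)$-a.e.\ $x$ the foreign mass $\sigma(3Q)$ is $\mu(3Q)$-negligible for small $Q\ni x$; and (ii) each $\Gamma_j$ ($j\ne i$) is itself a rectifiable curve, so it shadows $\Gamma_i$ within a bounded multiple of the scale only in a Traveling-Salesman-summable collection of cubes. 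Making this quantitative enough for the cross-term series to converge $(\mu\res A_i)$-a.e.\ is, I expect, the main difficulty --- and is why, unlike the single-curve case, one gets only almost-everywhere finiteness rather than an $L^1$ bound (cf.\ Example~\ref{e:GKS2}).
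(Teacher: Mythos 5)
Your reduction to a single curve, the cancellation of the weight $\diam Q/\nu(Q)$ against $\int_Q d\nu$ under the integral, and the isolation of the cross term all match the skeleton of the paper's argument (Proposition \ref{prop1L}). But the proposal stops exactly at what you yourself call ``the main difficulty'': you do not prove that the cross-term series converges, you only list two heuristics for why it might. That is a genuine gap, and the heuristics point away from the actual mechanism. The paper uses neither the fact that the foreign mass $\sigma$ lives on other rectifiable curves nor any differentiation statement that $\sigma(3Q)/\mu(3Q)\to0$; the off-curve mass is treated as completely unstructured, and one obtains the quantitative bound $\int_E\tJ_2(\nu,r_0,\cdot)\,d\nu\lesssim\Haus^1(\Gamma)+\nu(\RR^n\setminus\Gamma)$ --- so on each good piece one does get an $L^1$ bound, contrary to your closing speculation.

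The ingredient you are missing is the lower density. Lemma \ref{l:LD} (proved via packing measures) gives $\lD1(\mu,\cdot)>0$ $\mu$-a.e.\ for a $1$-rectifiable $\mu$, so after a further countable exhaustion one may assume $\mu(B(x,r))\geq c\,r$ for all $x$ in the good set $E\subset\Gamma$ and all $r\leq r_0$. For any dyadic $Q$ with $\mu(E\cap Q)>0$ this yields $\mu(3Q)\gtrsim c\diam Q$, which converts the factor $1/\bigl(\mu(3Q)(\diam 3Q)^2\bigr)$ in your cross term into $\lesssim 1/\bigl(c(\diam Q)^3\bigr)$. Summing over $Q\ni x$, integrating in $x$ against $\mu\res E$ (cancelling $\diam Q/\mu(Q)$ against $\mu(E\cap Q)\leq\mu(Q)$ as before), and applying Tonelli in $y$, the cross term is controlled by
\begin{equation*}
\frac{1}{c}\int \dist(y,\Gamma)^2\Bigl(\sum_{Q:\;3Q\ni y,\ \mu(E\cap Q)>0}\frac{1}{(\diam Q)^2}\Bigr)\,d\sigma(y),
\end{equation*}
and since every cube in the inner sum meets $\Gamma$ and contains $y$ in $3Q$, hence has $\diam 3Q\gtrsim\dist(y,\Gamma)$, that sum is a geometric series $\lesssim_n\dist(y,\Gamma)^{-2}$; the whole expression is $\lesssim_n\sigma(\RR^n)/c<\infty$. (One must enlarge $3Q$ to $aQ$ with $a\sim\sqrt n$ so that $\dist(y,\Gamma\cap aQ)=\dist(y,\Gamma)$ for the relevant $y$; the paper organizes the Tonelli step via a Whitney decomposition of $\RR^n\setminus\Gamma$ and a dichotomy between cubes where $\beta_\Gamma$ dominates $\beta_2(\nu,3Q)$ and cubes where the transverse moment of the off-curve mass dominates, but the mechanism is the one above.) Without the lower density input --- which appears nowhere in your proposal --- the normalization $1/\mu(3Q)$ cannot be traded for $1/\diam Q$ and the argument does not close; with it, no information about the geometry of the other curves $\Gamma_j$ is needed.
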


\begin{maincor}\label{c:B} Let $\mu$ be a locally finite Borel measure on $\RR^n$. If $\mu$ is 1-rectifiable and $\mu\ll \Haus^1$, then $J_2(\mu,x)<\infty$ at $\mu$-a.e.~ $x\in\RR^n$.\end{maincor}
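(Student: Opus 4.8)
The plan is to derive Corollary \ref{c:B} from Theorem \ref{t:A} by a short bookkeeping argument, exploiting the fact that $J_2(\mu,\cdot)$ and $\tJ_2(\mu,\cdot)$ are assembled from the \emph{same} beta numbers $\beta_2^2(\mu,3Q)$ over the dyadic cubes $Q\in\Delta(\RR^n)$ with $\side Q\leq 1$; the two functions differ only in that each term of $\tJ_2$ carries the extra weight $\diam Q/\mu(Q)$. Thus it suffices to show that, for $\mu$-a.e.\ $x$, this weight is bounded below along all sufficiently small dyadic cubes containing $x$, while the finitely many larger scales contribute only a finite amount to $J_2(\mu,x)$ for free because $\beta_2\leq 1$.

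First I would pin down the $\mu$-null set to be discarded. Theorem \ref{t:A} gives $\tJ_2(\mu,x)<\infty$ for $\mu$-a.e.\ $x$. Because $\mu\ll\Haus^1$ and $\mu$ is $1$-rectifiable, the forward direction of Preiss's Theorem \ref{t:P} (with $m=1$) shows that $D^1(\mu,x)$ exists and is finite, so in particular $\uD1(\mu,x)<\infty$ for $\mu$-a.e.\ $x$; alternatively, $\uD1(\mu,x)<\infty$ $\mu$-a.e.\ follows from the elementary upper density estimate for Radon measures together with $\mu\ll\Haus^1$, without invoking rectifiability. Finally, $\bigcup\{Q\in\Delta(\RR^n):\mu(Q)=0\}$ is a countable union of $\mu$-null sets, hence $\mu$-null, so for $\mu$-a.e.\ $x$ every dyadic cube containing $x$ has positive $\mu$-measure. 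Fix a point $x$ lying outside all three of these null sets.

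Now the main estimate. Since $\uD1(\mu,x)<\infty$, choose integers $k\geq 1$ and $j_0\geq 1$ with $\mu(B(x,r))\leq kr$ for all $0<r\leq \sqrt n\,2^{-j_0}$. If $Q\in\Delta(\RR^n)$ satisfies $x\in Q$ and $\side Q=2^{-j}\leq 2^{-j_0}$, then $Q\subset B(x,\diam Q)$, so $0<\mu(Q)\leq \mu(B(x,\diam Q))\leq k\diam Q$, which rearranges to $1\leq k\,\diam Q/\mu(Q)$. Writing
\[
J_2(\mu,x)=\sum_{\substack{x\in Q\in\Delta(\RR^n)\\ 2^{-j_0}<\side Q\leq 1}}\beta_2^2(\mu,3Q)+\sum_{\substack{x\in Q\in\Delta(\RR^n)\\ \side Q\leq 2^{-j_0}}}\beta_2^2(\mu,3Q),
\]
the first sum runs over only finitely many cubes (at most $2^{n}j_0$ of them) and each summand is at most $1$, so it is finite; in the second sum each summand is bounded by $k\,\beta_2^2(\mu,3Q)\,\diam Q/\mu(Q)$, so the second sum is at most $k\,\tJ_2(\mu,x)<\infty$. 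Hence $J_2(\mu,x)<\infty$, and since $x$ ranged over the complement of a $\mu$-null set, the corollary follows.

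I do not anticipate a serious obstacle here: essentially all of the work is in Theorem \ref{t:A}, and the passage to Corollary \ref{c:B} is accounting. The two points that require a little care are (i) the convention $0/0=0$ in \eqref{e:dnJones}, which forces us to exclude cubes of zero $\mu$-mass by a null-set argument rather than by a direct termwise comparison, and (ii) selecting the appropriate form of the density theorem so as to conclude $\uD1(\mu,x)<\infty$ $\mu$-a.e.\ from absolute continuity.
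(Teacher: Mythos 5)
Your proposal is correct and follows essentially the same route as the paper: the paper deduces the corollary from Theorem \ref{t:A} together with Lemma \ref{l:UD} (absolute continuity gives $\uD1(\mu,\cdot)<\infty$ $\mu$-a.e.) and Lemma \ref{l:c1}, whose proof is exactly your termwise comparison $\beta_2^2(\mu,3Q)\le M\,\beta_2^2(\mu,3Q)\,\diam Q/\mu(Q)$ at small scales, with the finitely many large scales handled via Lemma \ref{l:truncate}. Your extra care with cubes of zero mass and the alternative appeal to Preiss are harmless variations, not a different argument.
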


In the special case of 1-dimensional Hausdorff measure restricted to a compact set, Corollary \ref{c:B} (together with Lemma \ref{l:LD}) immediately yields the converse to Theorem \ref{t:p1}.

\begin{maincor}\label{c:C} Suppose that $K\subset\RR^n$ is compact and $\mu=\Haus^1\res K$ is finite. Then $\mu$ is 1-rectifiable if and only if both $\lD1(\mu,\cdot)>0$ and $J_2(\mu,\cdot)<\infty$ $\mu$-a.e.~ $x\in\RR^n$.\end{maincor}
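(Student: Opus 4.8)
The plan is to obtain Corollary \ref{c:C} by assembling three ingredients that will already be in place: Pajot's Theorem \ref{t:p1}, Corollary \ref{c:B}, and the classical density theorem for $1$-rectifiable sets of finite length, which will appear as Lemma \ref{l:LD}. Neither implication should require work beyond these citations; the substantive content is entirely inside Corollary \ref{c:B}, hence inside Theorem \ref{t:A}, and this corollary merely packages that result against the known sufficient condition of Pajot.

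For the ``if'' direction I would argue as follows. Suppose $\lD1(\mu,\cdot)>0$ and $J_2(\mu,\cdot)<\infty$ hold $\mu$-a.e. Since $K$ is compact and $\mu=\Haus^1\res K$ is finite, these are precisely the hypotheses of Theorem \ref{t:p1}, and so $\mu$ is $1$-rectifiable. This is exactly the statement that Corollary \ref{c:C} furnishes the converse of, so nothing further is needed here.

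For the ``only if'' direction, suppose $\mu$ is $1$-rectifiable. Because $\mu=\Haus^1\res K$ we have $\mu\ll\Haus^1$ for free, so Corollary \ref{c:B} applies and gives $J_2(\mu,x)<\infty$ at $\mu$-a.e.\ $x$. It remains to produce the lower density bound. By Definition \ref{d:rect} and the remark following it, $1$-rectifiability of the measure $\mu=\Haus^1\res K$ means that $K$ is covered, up to an $\Haus^1$-null set, by countably many rectifiable curves; in particular $K$ is a $1$-rectifiable set of finite $\Haus^1$ measure. The density theorem for such sets, which is the content of Lemma \ref{l:LD}, then yields $\lD1(\mu,x)>0$ at $\mu$-a.e.\ $x$. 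The two conditions each hold off a $\mu$-null set, so off their union (again $\mu$-null) both hold simultaneously, which completes this direction.

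The only point I would be careful about is the passage from rectifiability of the \emph{measure} $\mu=\Haus^1\res K$ in the sense of Definition \ref{d:rect} to rectifiability of the \emph{set} $K$ up to an $\Haus^1$-null set, since Lemma \ref{l:LD} is naturally phrased for sets; this is immediate from the remark equating Definition \ref{d:rect} with coverability by rectifiable curves, so it is bookkeeping rather than a real difficulty. Beyond that I do not anticipate any genuine obstacle: the depth of Corollary \ref{c:C} is inherited wholesale from Theorem \ref{t:A}, and the corollary itself is a short deduction combining that theorem (via Corollary \ref{c:B}), Theorem \ref{t:p1}, and Lemma \ref{l:LD}.
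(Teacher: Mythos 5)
Your proposal is correct and matches the paper's argument exactly: the ``if'' direction is Theorem \ref{t:p1}, and the ``only if'' direction combines Corollary \ref{c:B} (using $\mu=\Haus^1\res K\ll\Haus^1$) with Lemma \ref{l:LD}. The one point you flag as needing care is actually moot, since Lemma \ref{l:LD} is stated in the paper directly for $1$-rectifiable \emph{measures}, so it applies to $\mu$ with no passage to sets required.
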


A qualitative consequence of Theorem \ref{t:A} is that a 1-rectifiable measure $\mu$ exhibits at least one of two extreme behaviors at $\mu$-almost every $x$ in $\RR^n$: $\mu$ admits arbitrarily good 1-dimensional linear approximations near $x$ \emph{or} $\mu$ has arbitrarily large 1-dimensional density ratio near $x$. Thus, if one extreme fails, the other extreme must occur.

\begin{example} Let $\mu$ be a measure from Example \ref{e:GKS} with defining parameter $\delta\leq \delta_n$. Since $\tJ_2(\mu,x)<\infty$ and $\beta(\mu,3Q)\sim 1$ for every dyadic cube $Q$, the Hausdorff 1-density of $\mu$ exists and $D^1(\mu,x)=\infty$ at $\mu$-a.e.~ $x\in\RR^n$.
\end{example}

The proofs of Theorem \ref{t:A} and Corollary \ref{c:B} will be given in \S\ref{s:prelim} (prerequisites) and \S\ref{s:A} (main arguments). We remark that because the proofs rely on the Traveling Salesman Theorem for rectifiable curves in $\RR^n$, it is not immediately clear how to use our method to study $m$-rectifiable measures for $m\geq 2$. Our work also leaves open the possibility of finding sufficient conditions for rectifiability. Nevertheless, we believe that the idea encoded in the definition of $\tJ_2(\mu,\cdot)$---to normalize a multiscale quantity by the density of a measure scale-by-scale---is a fruitful idea that should prove useful in additional situations.

To end the paper, in \S \ref{s:related}, we discuss some connections between Theorem \ref{t:A} and Corollary \ref{c:B}, and prior work of L\'eger \cite{Leger} (Menger curvature), Lerman \cite{Lerman} (curve learning) and Tolsa \cite{Tolsa} (mass transport).

\begin{ack} The authors would like to thank Marianna Cs\"ornyei for insightful discussions about this project. The authors would also like to thank an anonymous referee for his or her careful reading of the paper. Part of this work was carried out while both authors visited the Institute for Pure and Applied Mathematics (IPAM) during the Spring 2013 long program on Interactions between Analysis and Geometry. \end{ack}

\section{Traveling Salesman Theorem and Other Prerequisites}\label{s:prelim}

In this section, we recall an essential tool from the theory of quantitative rectifiability: the Analyst's Traveling Salesman Theorem. We also collect miscellaneous lemmas which facilitate the proofs in section \ref{s:A}.

\begin{definition} Let $E\subset\RR^n$ be any set. For every bounded set $Q\subset\RR^n$ such that $E\cap Q\neq\emptyset$, define the quantity $\beta_{E}(Q)$ by \begin{equation*}
  \beta_{E}(Q):=\inf_{\ell}\sup_{x\in E\cap Q}\frac{\dist(x,\ell)}{\diam Q}\end{equation*} where $\ell$ ranges over all lines in $\RR^n$. By convention, we set $\beta_E(Q)=0$ if $E\cap Q=\emptyset$.
\end{definition}

\begin{theorem}[Traveling Salesman Theorem, \cite{Jones-TST,Ok-TST}] \label{t:tst} A bounded set $E\subset\RR^n$ is a subset of a rectifiable curve in $\RR^n$ if and only if
\begin{equation*} \beta^2(E):=\sum_{Q\in\Delta(\RR^n)}\beta_E(3Q)^{2}\diam Q<\infty.\end{equation*} Moreover, there exists a constant $C=C(n)\in(1,\infty)$ (independent of $E$) such that
\begin{itemize}
\item $\beta^2(E)\leq C \Haus^1(\Gamma)$ for every connected set $\Gamma$ containing $E$, and
\item there exists a connected set $\Gamma\supset E$ such that $\Haus^1(\Gamma)\leq C(\diam E +\beta^2(E)).$
\end{itemize} \end{theorem}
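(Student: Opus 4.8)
The plan is to establish the biconditional by proving its two quantitative halves separately: a \emph{lower bound}, $\beta^2(E)\le C\Haus^1(\Gamma)$ for every connected $\Gamma\supseteq E$, and a \emph{construction}, producing a connected $\Gamma\supseteq E$ with $\Haus^1(\Gamma)\le C(\diam E+\beta^2(E))$. Together with the classical facts that a connected set of finite $\Haus^1$-measure is contained in a rectifiable curve and that a rectifiable curve has finite $\Haus^1$-measure, these two estimates yield both the equivalence and the stated moreover-clause. Throughout one may assume $E$ is bounded and, after rescaling, that $\diam E=1$.

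\emph{Lower bound.} Since $\beta_E(3Q)\le\beta_\Gamma(3Q)$ whenever $E\subseteq\Gamma$, it suffices to bound $\sum_Q\beta_\Gamma(3Q)^2\diam Q$ when $\Gamma$ is a rectifiable curve. The key is a single-cube length-excess estimate: if $Q$ is a dyadic cube meeting $\Gamma$ and $\Gamma$ also meets $\RR^n\setminus 3Q$, then $\beta_\Gamma(3Q)^2\diam Q\le C\bigl(\Haus^1(\Gamma\cap 3Q)-w_Q\bigr)$, where $w_Q$ is the length of the orthogonal projection of $\Gamma\cap 3Q$ onto a line realizing $\beta_\Gamma(3Q)$; this follows from the elementary fact that a path joining two points at distance $\rho$ while attaining transversal deviation $h$ has length at least $\rho+h^2/\rho$. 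I would then fix a scale $k$, observe that the truncated quantities $\Haus^1(\Gamma\cap 3Q)-w_Q$ attached to the dyadic cubes $Q$ of side $2^{-k}$ have bounded overlap, sum over such $Q$ to get a bound that telescopes across scales down to a ``chord length'' term, and finally sum the geometric contribution of the finitely many scales together with that of the terminal cubes (those with $\Gamma\subseteq 3Q$), which is at most $C\diam\Gamma\le C\Haus^1(\Gamma)$.

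\emph{Construction.} With $\diam E=1$ and $B^2:=\beta^2(E)<\infty$, for each integer $k\ge0$ choose a maximal $2^{-k}$-separated subset $X_k\subseteq E$ and build an increasing sequence of connected polygonal graphs $G_k$ with $X_k\subseteq G_k$: take $G_0$ a single point, and construct $G_{k+1}$ from $G_k$ by threading the new points of $X_{k+1}$ through the edges of $G_k$ in their natural linear order and locally rerouting. The heart of the argument is the per-scale estimate
\[
\Haus^1(G_{k+1})-\Haus^1(G_k)\ \le\ C\!\!\!\sum_{Q:\,\side Q=2^{-k}}\!\!\!\beta_E(3Q)^2\,\diam Q\ +\ (\text{telescoping error}),
\]
where the error is offset against the decrease in length gained by replacing each visited arc by its chord, and the main term uses that the points of $X_{k+1}$ lying in a given cube $3Q$ sit in a $\beta_E(3Q)\diam Q$-neighborhood of a line, so threading them costs at most $C\beta_E(3Q)^2\diam Q$ more than the straight segment --- the quadratic (rather than linear) dependence on $\beta_E$ being precisely the point at which the argument must be delicate. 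Summing over $k$ gives $\sup_k\Haus^1(G_k)\le C(1+B^2)$, and I would then take $\Gamma$ to be a Hausdorff subsequential limit of the $G_k$: it is connected and contains $E$ because $X_k\to E$ in the Hausdorff metric, and $\Haus^1(\Gamma)\le\liminf_k\Haus^1(G_k)\le C(1+B^2)$ by lower semicontinuity of $\Haus^1$ along Hausdorff-convergent continua (Go\l{}ab's theorem).

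\emph{Main obstacle.} I expect the principal difficulty to be the bookkeeping in the construction: one must choose the connecting segments and the rerouting so that the excess length charged at scale $k$ is controlled by $\sum_{\side Q=2^{-k}}\beta_E(3Q)^2\diam Q$ rather than by the (non-summable) first power of $\beta_E$, which requires both a careful overlap bound on the cubes charged at each scale and the Pythagorean gain noted above. The corresponding subtlety on the lower-bound side is making the single-cube length-excess estimate and its bounded-overlap summation precise. Both are carried out in \cite{Jones-TST} for $n=2$ and in \cite{Ok-TST} for general $n$.
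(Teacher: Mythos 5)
This theorem is not proved in the paper: it is imported verbatim from \cite{Jones-TST,Ok-TST} and used as a black box (the only thing the paper itself establishes about it is Corollary \ref{c:tst}, the rescaling from $3Q$ to $aQ$). So there is no in-paper argument to compare against; what can be assessed is whether your outline is a faithful sketch of the known proof, and it is. Your two halves --- the per-cube Pythagorean excess estimate with bounded overlap and telescoping for $\beta^2(E)\leq C\Haus^1(\Gamma)$, and the farthest-point/net-threading construction with the per-scale increment bounded by $\sum_{\side Q=2^{-k}}\beta_E(3Q)^2\diam Q$ followed by G\H{o}\l{}\k{a}b semicontinuity --- are exactly the standard architecture, and you correctly flag the two genuinely delicate points (charging excess length only once across scales, and obtaining quadratic rather than linear dependence on $\beta$ when threading). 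One historical caveat: the necessity direction as you describe it (inscribed polygons and telescoping chord lengths) is closer to the later streamlined treatments than to Jones's original planar argument, which routes through Lipschitz graphs and a corona-type decomposition; Okikiolu's higher-dimensional proof is the geometric one. As a proof it is of course only a sketch, but for a quoted theorem whose proof occupies entire papers, deferring the technical work to \cite{Jones-TST,Ok-TST} is appropriate.
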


\begin{corollary} \label{c:tst} For all $n\geq 2$ and $3<a<\infty$, there is a constant $C'=C'(n,a)\in(1,\infty)$ such that if $E\subset \RR^n$ is bounded and $\Gamma$ is a connected set containing $E$, then $$\sum_{Q\in\Delta(\RR^n)} \beta_E(aQ)^2\diam Q \leq C' \Haus^1(\Gamma).$$\end{corollary}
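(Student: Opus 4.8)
The plan is to reduce the estimate for the dilate $aQ$ to the estimate for the triple $3Q$ already provided by the Traveling Salesman Theorem (Theorem \ref{t:tst}), using a covering argument. The key point is that $\beta_E(aQ)$ controls the flatness of $E$ in a slightly larger ball than $3Q$, but a ball of bounded overlap can be covered by boundedly many dyadic cubes of comparable size whose triples cover it; one then transfers the bound at scale $aQ$ to a sum of bounds at scale $3Q'$ for nearby cubes $Q'$.

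First I would fix $a$ with $3<a<\infty$ and let $Q\in\Delta(\RR^n)$ with $\side Q=2^{-k}$. I would find an integer $j=j(n,a)\geq 0$ (depending only on $n$ and $a$) and a collection $\mathcal{Q}(Q)$ of dyadic cubes $Q'$ with $\side Q'=2^{-k+j}$ such that $aQ\subset\bigcup_{Q'\in\mathcal{Q}(Q)}3Q'$ and $\#\mathcal{Q}(Q)\leq N=N(n,a)$. Concretely, one takes $j$ large enough that $2^j\geq a$, so that a dyadic cube $Q'$ of side $2^{-k+j}$ has $3Q'$ of diameter comparable to (and at least) $\diam(aQ)$; one then needs only the boundedly many such $Q'$ meeting $aQ$, and since $a$ is fixed this number is bounded by a dimensional constant. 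For each such $Q'$, monotonicity of $\beta_E$ under enlarging the set over which the sup is taken gives $\beta_E(aQ)\leq (\diam 3Q'/\diam aQ)\,\beta_E(3Q')\leq C_1\beta_E(3Q')$ whenever $E\cap aQ\subset 3Q'$ — but since no single $3Q'$ need contain all of $E\cap aQ$, I instead use that for \emph{any} line $\ell$ realizing (nearly) the infimum for a given $Q'$, the corresponding sup over $E\cap aQ$ is estimated by splitting $E\cap aQ$ over the cover; the cleanest route is simply the pointwise bound
\begin{equation*}
\beta_E(aQ)^2\diam Q \;\leq\; C_2(n,a)\sum_{Q'\in\mathcal{Q}(Q)}\beta_E(3Q')^2\diam Q',
\end{equation*}
which follows because $\diam Q\sim\diam Q'$ with ratio depending only on $j$, and because $\beta_E(aQ)\lesssim\max_{Q'}\beta_E(3Q')$ up to the same dimensional factors (any line far from $E\cap aQ$ in sup-norm is far from $E\cap 3Q'$ for some $Q'$ in the cover).

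Next I would sum over all $Q\in\Delta(\RR^n)$. The left side becomes $\sum_Q\beta_E(aQ)^2\diam Q$, and the right side is bounded by $C_2\sum_Q\sum_{Q'\in\mathcal{Q}(Q)}\beta_E(3Q')^2\diam Q'$. The inner double sum is a sum over pairs $(Q,Q')$ with $Q'$ at a fixed relative scale $2^j$ and within bounded distance of $Q$; reversing the order of summation, each fixed $Q'$ arises from at most $N'=N'(n,a)$ cubes $Q$, so
\begin{equation*}
\sum_{Q\in\Delta(\RR^n)}\beta_E(aQ)^2\diam Q \;\leq\; C_2 N'\sum_{Q'\in\Delta(\RR^n)}\beta_E(3Q')^2\diam Q' \;=\; C_2 N'\,\beta^2(E).
\end{equation*}
Finally, by Theorem \ref{t:tst}, $\beta^2(E)\leq C(n)\Haus^1(\Gamma)$ for every connected set $\Gamma\supset E$, so taking $C'=C'(n,a):=C_2 N' C(n)$ (and enlarging it to exceed $1$) completes the proof.

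The main obstacle is the bookkeeping in the covering step: one must verify carefully that $\beta_E(aQ)$ really is controlled by a \emph{bounded} number of terms $\beta_E(3Q')^2$ with $\diam Q'\sim\diam Q$, rather than picking up an unbounded cascade of scales or a scale-dependent constant. This is where the hypothesis $a<\infty$ (and $a>3$, ensuring the triples genuinely enlarge the cubes) is used: for fixed $a$ the relative scale gap $j$ and the overlap counts $N,N'$ depend only on $n$ and $a$. Once that uniformity is in hand, the rest is a routine Fubini-type interchange of summation followed by an appeal to Theorem \ref{t:tst}.
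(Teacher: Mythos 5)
Your overall strategy---compare $\beta_E(aQ)$ to beta numbers of triples of dyadic cubes at a coarser scale $2^{j}\side Q$, resum using bounded multiplicity, and invoke Theorem \ref{t:tst}---is exactly the paper's. But the step where you pass from $\beta_E(aQ)$ to the cover $\{3Q'\}_{Q'\in\mathcal{Q}(Q)}$ contains a genuine error. The inequality $\beta_E(aQ)\lesssim\max_{Q'\in\mathcal{Q}(Q)}\beta_E(3Q')$ is false when $\mathcal{Q}(Q)$ has more than one element: beta numbers are defined by an infimum over lines, and the (near-)optimal lines for different $Q'$ need not be close to one another. For instance, $E\cap aQ$ could consist of two parallel segments, each lying on a line and each contained in a different $3Q'$ of the cover and disjoint from the others, so that every $\beta_E(3Q')=0$ while $\beta_E(aQ)>0$. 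Your parenthetical justification (``any line far from $E\cap aQ$ is far from $E\cap 3Q'$ for some $Q'$'') only shows that a \emph{single fixed} line which is bad for $aQ$ is bad for some $3Q'$; it does not let you combine the infima taken separately over each $Q'$. This non-subadditivity of $\beta$ under coverings is precisely why the Traveling Salesman Theorem is nontrivial, so it cannot be waved away.

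The fix is immediate, and with your own choice of scale it is already available: you do not need a cover at all, because a \emph{single} coarser cube works. If $Q^{\uparrow m}$ denotes the dyadic ancestor of $Q$ with $\diam Q^{\uparrow m}=2^m\diam Q$ and $m=\lceil\log_2 a\rceil$, then $aQ\subset 3Q^{\uparrow m}$ (the triple $3Q^{\uparrow m}$ extends $2^m\side Q\geq a\,\side Q$ beyond $Q^{\uparrow m}\supset Q$ in each coordinate, while $aQ$ extends only $\tfrac{a-1}{2}\side Q$ beyond $Q$). Then the monotonicity you correctly state---$\beta_E(aQ)\leq(\diam 3Q^{\uparrow m}/\diam aQ)\,\beta_E(3Q^{\uparrow m})\leq 2^m\beta_E(3Q^{\uparrow m})$---applies verbatim, and your resummation goes through: each cube $R$ equals $Q^{\uparrow m}$ for exactly $2^{mn}$ descendants $Q$, giving $\sum_Q\beta_E(aQ)^2\diam Q\leq 2^{m(1+n)}\beta^2(E)\leq 2^{m(1+n)}C(n)\Haus^1(\Gamma)$. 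This corrected argument is exactly the one in the paper.
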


\begin{proof} Let $n\geq 2$ and let $3<a<\infty$ be given. For any dyadic cube $Q\in\Delta(\RR^n)$ and integer $k\geq 0$, let $Q^{\uparrow k}\in\Delta(\RR^n)$ denote the $k$th ancestor of $Q$, i.e.~ $Q^{\uparrow k}$ is the unique dyadic cube containing $Q$ with $\diam Q^{\uparrow k}= 2^k\diam Q$. Choose $m\geq 1$ large enough depending only on $a$ such that $aQ\subset 3Q^{\uparrow m}$ for all $Q\in\Delta(\RR^n)$ (e.g.~ $m=\lceil \log_2 a\rceil$ will suffice.) Then $\beta_E(aQ) \leq (3\cdot 2^m/a)\beta_E(3 Q^{\uparrow m}) \leq 2^m \beta_E(3 Q^{\uparrow m})$ for all $E\subset\RR^n$ and $Q\in\Delta(\RR^n)$. Hence \begin{equation*}\begin{split} \sum_{Q\in\Delta(\RR^n)} \beta_E(aQ)^2 \diam Q &\leq 2^{2m}\sum_{Q\in\Delta(\RR^n)} \beta_E (3Q^{\uparrow m})^2 \diam Q  \\ &= 2^m\sum_{Q\in\Delta(\RR^n)} \beta_E(3Q^{\uparrow m})^2 \diam Q^{\uparrow m} \\ &= 2^{m(1+n)}\sum_{Q\in\Delta(\RR^n)} \beta_E(3Q)^2\diam Q \leq 2^{m(1+n)} C\Haus^1(\Gamma)
\end{split}\end{equation*} whenever $E\subset\RR^n$ is bounded and $\Gamma$ is a connected set containing $E$ by Theorem \ref{t:tst}. \end{proof}

\begin{remark}\label{r:hilbert} Inspecting the proof in \cite{Ok-TST} shows that the constants $C$ in Theorem \ref{t:tst} depends exponentially on the dimension $n$. Schul \cite{Schul-Hilbert} formulated a version of the Analysts' Traveling Salesman Theorem that is valid in infinite-dimensional Hilbert space. However, to obtain Theorem \ref{t:tst} with constants that are independent of the dimension $n$, one must replace the grid of dyadic cubes appearing in the definition of $\beta^2(E)$ with a ``multiresolution family" of balls that are adapted to a sequence $(X_k)_{k=1}^\infty$ of $2^{-k}$-nets for the set $E$.\end{remark}

We now enumerate several lemmas that will be used in section \ref{s:A}, starting with some facts from geometric measure theory. To fix conventions, we recall the definitions of Hausdorff and packing measures in $\RR^n$. See \cite[Chapters 4--6]{Mattila} for general background.

\begin{definition}[Hausdorff and packing measures in $\RR^n$] Let $s\geq 0$ be a real number. Let $E,E_1,E_2,\dots$ denote Borel sets in $\RR^n$. The \emph{$s$-dimensional Hausdorff measure} $\Haus^s$ is defined by $\Haus^s(E)=\lim_{\delta\rightarrow 0} \Haus^s_\delta(E)$ where \begin{equation*} \Haus^s_\delta(E)=\inf\left\{\sum_i (\diam E_i)^s:E\subset\bigcup_i E_i,\ \diam E_i\leq \delta\right\}.\end{equation*} The \emph{$s$-dimensional packing premeasure} $P^s$ is defined by $P^s(E)=\lim_{\delta\rightarrow 0} P^s_\delta(E)$ where \begin{equation*}P^s_\delta(E)=\sup\left\{\sum_{i}(2r_i)^s: x_i\in E, 2r_i\leq \delta, i\neq j\Rightarrow B(x_i,r_i)\cap B(x_j,r_j)=\emptyset\right\}.\end{equation*} The \emph{$s$-dimensional packing measure} $\Pack^s$ is defined by \begin{equation*}\Pack^s(E)=\inf\left\{\sum_i P^s(E_i):E=\bigcup_i E_i\right\}.\end{equation*}\end{definition}

\begin{lemma}\label{l:UD} Let $\mu$ be a locally finite Borel measure on $\RR^n$. Then $\mu\ll \Haus^s$ if and only if $\uD{s}(\mu,\cdot)<\infty$ $\mu$-a.e.\end{lemma}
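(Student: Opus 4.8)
The plan is to prove both directions of Lemma~\ref{l:UD} using the standard density comparison principles relating a measure $\mu$ to Hausdorff measure $\Haus^s$ (see \cite[Chapters~2 and~6]{Mattila}). These are of the form: if $\uD{s}(\mu,x)\le t$ for all $x\in A$, then $\mu(A)\le c\,t\,\Haus^s(A)$ for a dimensional constant $c$; and if $\uD{s}(\mu,x)\ge t$ for all $x\in A$, then $\mu(A)\ge c't\,\Haus^s(A)$. I would cite these rather than reprove them.

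For the forward direction, suppose $\mu\ll\Haus^s$. Let $A_k=\{x:\uD{s}(\mu,x)\ge k\}$ for $k\in\ZZ$, $k\ge 1$; I want to show $\mu\left(\bigcap_k A_k\right)=0$, equivalently $\mu(\{x:\uD{s}(\mu,x)=\infty\})=0$. Fix a bounded Borel set $B$ (enough by $\sigma$-finiteness of a locally finite measure). On $A_k\cap B$ we have $\uD{s}(\mu,x)\ge k$ everywhere, so the density comparison gives $\Haus^s(A_k\cap B)\le (c'k)^{-1}\mu(A_k\cap B)\le (c'k)^{-1}\mu(B)\to 0$ as $k\to\infty$. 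Hence $\Haus^s\left(B\cap\bigcap_k A_k\right)=0$, and since $\mu\ll\Haus^s$ we conclude $\mu\left(B\cap\bigcap_k A_k\right)=0$. Letting $B$ exhaust $\RR^n$ gives $\uD{s}(\mu,\cdot)<\infty$ $\mu$-a.e.

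For the reverse direction, suppose $\uD{s}(\mu,\cdot)<\infty$ $\mu$-a.e., and let $N$ be a Borel set with $\Haus^s(N)=0$; I must show $\mu(N)=0$. Write $N=N_0\cup\bigcup_{k\ge 1}N_k$ where $N_0$ is the $\mu$-null set on which $\uD{s}(\mu,\cdot)=\infty$ and $N_k=\{x\in N:\uD{s}(\mu,x)< k\}$. On $N_k$ the upper density is bounded by $k$, so the first comparison principle yields $\mu(N_k)\le c\,k\,\Haus^s(N_k)\le c\,k\,\Haus^s(N)=0$. Therefore $\mu(N)\le\mu(N_0)+\sum_k\mu(N_k)=0$, proving $\mu\ll\Haus^s$.

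The only real subtlety — and the step to state carefully rather than treat as routine — is the measurability of the sets involved: one needs $x\mapsto\uD{s}(\mu,x)$ to be a Borel function so that $A_k$ and $N_k$ are Borel, which follows from $r\mapsto\mu(B(x,r))$ being, e.g., upper semicontinuous in $x$ for fixed $r$ and from writing the $\limsup$ over a countable set of radii (using that the density ratio over $B(x,r)$ and over a slightly larger or smaller radius differ controllably, via the elementary doubling-type estimate $\mu(B(x,r))\le\mu(B(y,r+|x-y|))$). Since both comparison principles are quoted from \cite{Mattila}, the argument is short; I would present it essentially as the two paragraphs above.
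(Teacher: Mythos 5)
Your argument is correct; the paper gives no proof of this lemma, citing it only as Exercise 4 in \cite[Chapter 6]{Mattila}, and your two-directional argument via the standard density comparison theorems \cite[Theorem 6.9]{Mattila} is exactly the intended solution. The measurability subtlety you flag at the end can in fact be sidestepped entirely: those comparison theorems are stated for \emph{arbitrary} (not necessarily Borel) sets $A$, since $\mu$ and $\Haus^s$ are Borel regular, so the sets $A_k$ and $N_k$ need not be shown to be Borel for the estimates and the countable subadditivity step to go through.
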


Lemma \ref{l:UD} is Exercise 4 in \cite[Chapter 6]{Mattila}.

\begin{lemma}\label{l:LD} Let $\mu$ be a locally finite Borel measure on $\RR^n$.  If $\mu$ is $m$-rectifiable, then $\lD{m}(\mu,\cdot)>0$ $\mu$-a.e.\end{lemma}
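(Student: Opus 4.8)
The plan is to reduce to the case where $\mu$ is carried by a single Lipschitz image and then argue by contradiction using a density comparison. First I would recall that if $\mu$ is $m$-rectifiable, then by definition there are countably many bounded Borel sets $E_i\subset\RR^m$ and Lipschitz maps $f_i:E_i\to\RR^n$ with $\mu(\RR^n\setminus\bigcup_i f_i(E_i))=0$. Since a countable union of null sets is null, it suffices to prove the claim for $\mu\res f_i(E_i)$ for each fixed $i$; more precisely, it suffices to show that for each $i$, $\lD{m}(\mu,x)>0$ at $\mu$-a.e.\ $x\in f_i(E_i)$. So fix one Lipschitz map $f:E\to\RR^n$ with $E\subset\RR^m$ bounded, let $A=f(E)$, and abbreviate $\nu=\mu\res A$.

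The key point is an upper bound on $\mu$-mass in terms of $\Haus^m$-mass of the image, which comes for free from the Lipschitz hypothesis. Since $f$ is, say, $L$-Lipschitz on $E$, for any ball $B(x,r)$ we have $f^{-1}(B(x,r)\cap A)\subset E$ has diameter comparison giving $\Haus^m(B(x,r)\cap A)=\Haus^m(f(f^{-1}(B(x,r))))\le L^m\Haus^m(f^{-1}(B(x,r)))$, but the cleaner route is: $A$ is an $m$-rectifiable \emph{set} (countable Lipschitz image of subsets of $\RR^m$), hence $\Haus^m\res A$ is locally finite and, more importantly, $\Haus^m(A\cap B(x,r))\le C r^m$ fails in general without Ahlfors bounds — so instead I would invoke the standard fact that for an $m$-rectifiable set $A$ with $\Haus^m(A)<\infty$ locally, the upper density $\uD{m}(\Haus^m\res A,x)\le 1$ (indeed $=1$) for $\Haus^m$-a.e.\ $x\in A$; more elementarily, $\uD m(\Haus^m\res A,\cdot)<\infty$ $\Haus^m$-a.e.\ on $A$. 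Now suppose toward a contradiction that the set $Z=\{x\in A:\lD{m}(\mu,x)=0\}$ has $\mu(Z)>0$. On $Z$, for each $x$ there are arbitrarily small radii $r$ with $\mu(B(x,r))\le\varepsilon\,\omega_m r^m$; one then uses a Vitali-type covering argument comparing $\mu\res Z$ to $\Haus^m\res A$ to force $\mu(Z)\le C\varepsilon\,\Haus^m(A\cap\{\text{relevant set}\})$, hence $\mu(Z)=0$ upon letting $\varepsilon\to0$.

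Concretely, the comparison step runs as follows. The natural statement to prove and then apply is the density theorem: if $\lD m(\nu,x)\le\varepsilon$ for all $x$ in a set $Z$ with $\nu=\mu\res A$, then $\nu(Z)\le c_n\,\varepsilon\,\Haus^m(Z)$ (a Frostman/Vitali covering lemma argument: cover $Z$ efficiently by balls on which $\nu(B)\le\varepsilon\,\omega_m r^m$, extract a $5r$-disjoint Vitali subfamily, sum). Combined with $\Haus^m(Z)\le\Haus^m(A)<\infty$ locally — or, if $\Haus^m(A)=\infty$ is possible, first intersect with a large ball and use local finiteness of $\Haus^m\res A$ for the rectifiable set $A$ — we get $\nu(Z)\le c_n\varepsilon\Haus^m(A\cap B(0,R))$ for every $R$ and $\varepsilon$, and since $Z$ is exactly $\bigcap_{\varepsilon>0}\{\lD m(\nu,\cdot)\le\varepsilon\}=\{\lD m(\nu,\cdot)=0\}$, letting $\varepsilon\to0$ gives $\nu(Z)=0$, i.e.\ $\mu(Z\cap A)=0$. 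Summing over the countable family of Lipschitz pieces finishes the proof.

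The main obstacle is the potential infinitude of $\Haus^m\res A$: one must be careful that $A=f(E)$ with $E$ only Borel and bounded still has $\sigma$-finite (indeed locally finite) $\Haus^m$-measure, which is true because $f$ is Lipschitz and $E\subset\RR^m$ is bounded, so $\Haus^m(A)=\Haus^m(f(E))\le(\Lip f)^m\,\Haus^m(E)\le(\Lip f)^m\,\Haus^m(\overline{E})<\infty$. Once that is in hand the argument is a routine density/covering lemma; the only real content beyond Definition~\ref{d:rect} is the standard fact that a measure with vanishing lower density on a set of positive measure cannot be dominated by a finite $\Haus^m$-measure there, which is the Vitali covering comparison sketched above. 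I expect no surprises in the covering argument, so the write-up should be short.
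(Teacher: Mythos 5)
Your reduction to a single Lipschitz piece $A=f(E)$ is the same first step as the paper's, but the comparison lemma you hinge everything on is not correct as stated, and the covering argument you sketch does not prove it. You claim that $\lD{m}(\nu,x)\le\varepsilon$ for all $x$ in $Z$ forces $\nu(Z)\le c_n\,\varepsilon\,\Haus^m(Z)$. Look at what your Vitali extraction actually produces: a \emph{disjoint} family of balls $B(x_i,r_i)$ centered in $Z$ with $\nu(B(x_i,r_i))\le\varepsilon\,\omega_m r_i^m$, whence $\nu(Z)\lesssim\varepsilon\sum_i r_i^m$. For a disjoint family centered in $Z$, the quantity $\sum_i(2r_i)^m$ is controlled by the packing premeasure $P^m(Z)$, not by $\Haus^m(Z)$; to relate $\sum_i r_i^m$ to Hausdorff measure you would need the balls to \emph{cover} $Z$, which bounds that sum from \emph{below} --- the wrong direction. (If you instead use the $5r$-covering lemma so that the dilates $B(x_i,5r_i)$ cover $Z$, you lose all control of $\nu(B(x_i,5r_i))$, because the density hypothesis gives small mass only at selected radii.) The asymmetry is fundamental: upper densities pair with Hausdorff measure, lower densities pair with packing measure. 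Indeed the Hausdorff version of your lemma is false in general: there are compact sets $A\subset\RR^2$ carrying a probability measure $\nu$ with $\nu(B(x,r))/r$ oscillating between $0$ and $\infty$ as $r\to0$ uniformly in $x$, so that $\lD{1}(\nu,\cdot)\equiv 0$ on $A$ while $\Haus^1(A)=0$; your inequality would then give $\nu(A)=0$, a contradiction. Your side remark about $\uD{m}(\Haus^m\res A,\cdot)\le 1$ does not help either, since $\mu$ need not be comparable to $\Haus^m\res A$ (the whole point of the paper is that $\mu$ may be singular).

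This is precisely why the paper routes the argument through packing measure rather than Hausdorff measure. Its Lemma \ref{l:lpack} shows $\Pack^m(f(E))\le(\Lip f)^m\,\Pack^m(E)<\infty$ because $E\subset\RR^m$ is bounded (finiteness of $\Pack^m$ for bounded subsets of $\RR^m$ is a trivial volume count, unlike any corresponding statement you would need for $\Haus^m$ of the image under your scheme), and then the correct density comparison --- $\lD{m}(\mu,\cdot)\le\lambda$ on a set $B$ implies $\mu(B)\le\lambda\,\Pack^m(B)$, which is \cite[Theorem 6.11]{Mattila} and is exactly what your disjoint-ball sum proves --- finishes the proof by letting $\lambda\to0$. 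One could in principle salvage a Hausdorff-measure version for the specific rectifiable set $A=f(E)$ by invoking the equality of $\Pack^m$ and $\Haus^m$ on rectifiable sets of finite measure, but that rests on the Besicovitch--Marstrand--Mattila density theorem and is far heavier machinery than the lemma warrants. To repair your write-up, replace $\Haus^m$ by $\Pack^m$ throughout the comparison step and add the (easy) Lipschitz-image bound for packing measure.
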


To prove Lemma \ref{l:LD}, we first prove an auxiliary lemma.

\begin{lemma}\label{l:lpack} Let $E\subset\RR^m$. If $f:E\rightarrow\RR^n$ is Lipschitz, then $P^s(f(E))\leq (\Lip f)^s\, P^s(E)$ and  $\Pack^s(f(E))\leq (\Lip f)^s\,\Pack^s(E)$. \end{lemma}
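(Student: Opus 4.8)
The plan is to estimate the packing premeasure $P^s$ directly from its definition as a supremum over packings by disjoint balls, then deduce the statement for the packing measure $\Pack^s$ by taking infima over countable decompositions. First I would recall that for a Lipschitz map $f:E\to\RR^n$ with Lipschitz constant $L=\Lip f$, one has $f(E\cap B(x,r))\subset B(f(x),Lr)$ for every $x\in E$ and $r>0$. The delicate point is that a disjoint family of balls centered in $f(E)$ need not pull back to a disjoint family of balls centered in $E$, so the naive approach of pushing a packing of $f(E)$ back through $f$ does not work. Instead I would run the comparison in the forward direction at the level of the premeasure $P^s_\delta$: fix $\delta>0$, let $\{B(y_i,r_i)\}_i$ be an admissible packing for $f(E)$ (so $y_i\in f(E)$, $2r_i\le\delta$, and the balls are pairwise disjoint), and — this is the step that needs care — observe that we are trying to bound $\sum_i(2r_i)^s$, not produce a packing of $E$. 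So the cleaner route is the reverse: start from a packing of $E$.

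Concretely, I would argue as follows. Fix $\delta>0$ and let $\{B(x_i,\rho_i)\}_i$ with $x_i\in E$, $2\rho_i\le\delta$, be a pairwise disjoint family realizing (up to $\varepsilon$) the supremum defining $P^{s}_{\delta'}(E)$ for a suitable $\delta'$. Set $y_i=f(x_i)\in f(E)$ and $r_i=L\rho_i$; then $2r_i\le L\delta$. The balls $B(y_i,r_i)$ need not be disjoint, so this still does not immediately give an admissible packing of $f(E)$. This is the genuine obstacle, and the standard resolution is to note that $P^s$ as defined here is a \emph{premeasure}, and that the inequality $P^s(f(E))\le L^s P^s(E)$ can instead be obtained by covering-type reasoning: one uses that $P^s$ is monotone and finitely (in fact countably) subadditive as a premeasure is not quite — so in fact the honest path is the one taken in Mattila's book, namely to prove the inequality for $P^s_\delta$ by a direct packing argument in which, given a disjoint packing $\{B(y_i,r_i)\}$ of $f(E)$ with small radii, one selects preimages $x_i\in E$ with $f(x_i)=y_i$ and checks that $\{B(x_i,r_i/L)\}$ is pairwise disjoint: if $B(x_i,r_i/L)\cap B(x_j,r_j/L)\neq\emptyset$ then $|x_i-x_j|<(r_i+r_j)/L$, hence $|y_i-y_j|=|f(x_i)-f(x_j)|\le L|x_i-x_j|<r_i+r_j$, contradicting disjointness of the original balls. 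Thus $\{B(x_i,r_i/L)\}$ is admissible for the packing premeasure of $E$ at scale $\delta/L$, and $\sum_i(2r_i)^s=L^s\sum_i(2r_i/L)^s\le L^s P^s_{\delta/L}(E)$. Taking the supremum over such packings gives $P^s_\delta(f(E))\le L^s P^s_{\delta/L}(E)$, and letting $\delta\to0$ yields $P^s(f(E))\le L^s P^s(E)$. (When $L=0$ the map is constant and the inequality is trivial; when $L>0$ the rescaling $\delta/L$ causes no difficulty.)

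For the packing measure, I would now invoke countable stability. Given any decomposition $E=\bigcup_i E_i$ into Borel pieces, we have $f(E)=\bigcup_i f(E_i)$, so by definition of $\Pack^s$ and the premeasure bound just established,
\[
\Pack^s(f(E))\;\le\;\sum_i P^s(f(E_i))\;\le\;L^s\sum_i P^s(E_i).
\]
Taking the infimum over all such decompositions of $E$ gives $\Pack^s(f(E))\le L^s\Pack^s(E)$, completing the proof. The main obstacle, as indicated, is the non-injectivity of $f$: one must be careful to pick preimages and verify that disjointness of balls is preserved under the contraction $x\mapsto$ (a chosen preimage), which follows cleanly from the Lipschitz estimate $|f(x_i)-f(x_j)|\le L|x_i-x_j|$. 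Everything else is bookkeeping with suprema and infima.
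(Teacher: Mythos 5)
Your proposal is correct and, after the self-correcting detour in the middle, lands on exactly the argument the paper uses: pull a disjoint $\delta$-packing of $f(E)$ back to a disjoint $(\delta/L)$-packing of $E$ via chosen preimages, compare $\sum_i(2r_i)^s=L^s\sum_i(2r_i/L)^s$, pass to the limit in $\delta$, and then deduce the $\Pack^s$ inequality from the premeasure inequality by taking infima over decompositions. Your explicit verification that disjointness of the image balls forces disjointness of the preimage balls (via $|f(x_i)-f(x_j)|\le L|x_i-x_j|$) is precisely the justification the paper leaves terse, so there is nothing to add.
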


\begin{proof}Assume that $P^s(E)<\infty$ and $f:E\rightarrow\RR^n$ is $L$-Lipschitz. Given $\varepsilon>0$, pick $\eta>0$ such that $P^s_\eta(E)\leq P^s(E)+\varepsilon$. Fix $0<\delta\leq L\eta$  and let $\{B^n(f(x_i),r_i):i\geq 1\}$ be an arbitrary disjoint collection of balls in $\RR^n$ centered in $f(E)$ such that $2r_i\leq \delta$ for all $i\geq 1$. Since $f$ is $L$-Lipschitz, \begin{equation*}f(B^m(x_i,r_i/L))\subset B^n(f(x_i),r_i)\quad\text{for all }i\geq 1.\end{equation*} Thus $\{B^m(x_i,r_i/L):i\geq 1\}$ is a disjoint collection of balls in $\RR^m$ centered in $E$ such that $2r_i/L\leq \delta/L\leq \eta$. Hence \begin{equation*} \sum_{i=1}^\infty (2r_i)^s = L^s\sum_{i=1}^\infty (2r_i/L)^s \leq L^sP^s_\eta(E)\leq L^s(P^s(E)+\varepsilon).\end{equation*} Taking the supremum over all $\delta$-packings of $f(E)$, we obtain $P^s_\delta(f(E))\leq L^s(P^s(E)+\varepsilon)$. Therefore, letting $\delta\rightarrow 0$ and $\varepsilon\rightarrow 0$, $P^s(f(E))\leq L^s P^s(E)$. The corresponding inequality for the packing measure $\Pack^s$ follows immediately from the inequality for $P^s$.\end{proof}

\begin{proof}[Proof of Lemma \ref{l:LD}] Let $\mu$ be a locally finite $m$-rectifiable Borel measure on $\RR^n$ and let $A=\{x\in\RR^n:\lD{m}(\mu,x)=0\}$. To show that $\mu(A)=0$ it suffices to prove that $\mu(A\cap f(E))=0$ for every bounded set $E\subset\RR^m$ and every Lipschitz map $f:E\rightarrow\RR^n$, since $\mu$ is $m$-rectifiable. To that end fix $E\subset\RR^m$ bounded and $f:E\rightarrow\RR^n$ Lipschitz. Then $\Pack^m(f(E))\leq (\Lip f)^m\, \Pack^m(E)<\infty$ by Lemma \ref{l:lpack} and the assumption that $E\subset\RR^m$ is bounded. Let $\lambda>0$. Since $\lD{m}(\mu,x)=0\leq\lambda$ for all $x\in A\cap f(E)$, we have \begin{equation*}\mu(A\cap f(E)) \leq \lambda \Pack^m(A\cap f(E))\leq \lambda \Pack^m(f(E))\end{equation*} by \cite[Theorem 6.11]{Mattila}. Thus, letting $\lambda\rightarrow 0$, $\mu(A\cap f(E))=0$ for every bounded set $E\subset\RR^m$ and every Lipschitz map $f:E\rightarrow\RR^n$. Therefore, $\mu(A)=0$, or equivalently, $\lD{m}(\mu,x)>0$ for $\mu$-a.e.~ $x\in\RR^n$. \end{proof}

Next we make some comparisons between the different $L^2$ Jones functions defined in the introduction.

\begin{lemma}\label{l:truncate} For every locally finite Borel measure $\mu$, the sets \begin{itemize}
 \item $\{x\in\RR^n:J_2(\mu,r,x)<\infty\}$ and $\{x\in\RR^n:J_2(\mu,r,x)=\infty\}$, and
 \item $\{x\in\RR^n:\tJ_2(\mu,r,x)<\infty\}$ and $\{x\in\RR^n:\tJ_2(\mu,r,x)=\infty\}$
\end{itemize} are independent of the parameter $r>0$.\end{lemma}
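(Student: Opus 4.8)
The plan is to show that enlarging the scale parameter from $r$ to $r'>r$ adds only finitely many terms to each of $J_2(\mu,\cdot,x)$ and $\tJ_2(\mu,\cdot,x)$, and that each added term is finite; the finiteness of the Jones function at $x$ is then insensitive to $r$.

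First I would fix $x\in\RR^n$ and reals $0<r<r'$ and record that the family
\[
  \mathcal{G}=\bigl\{Q\in\Delta(\RR^n):x\in Q,\ \side Q>r,\ \side Q\le r'\bigr\}
\]
is finite: the side lengths occurring in $\Delta(\RR^n)$ are the numbers $2^{-k}$, $k\in\ZZ$, and only finitely many of them lie in $(r,r']$; moreover, for each admissible side length $2^{-k}$ there are at most $2^n$ dyadic cubes of that side length containing $x$ (precisely one unless a coordinate of $x$ is a dyadic rational with denominator $2^k$). Since $\chi_Q(x)$ vanishes unless $x\in Q$, and since all summands are nonnegative, splitting the defining series of $J_2(\mu,r',x)$ (respectively $\tJ_2(\mu,r',x)$) into the cubes with $\side Q\le r$ and those with $r<\side Q\le r'$ gives
\[
  J_2(\mu,r',x)=J_2(\mu,r,x)+\sum_{Q\in\mathcal{G}}\beta_2^2(\mu,3Q),\qquad
  \tJ_2(\mu,r',x)=\tJ_2(\mu,r,x)+\sum_{Q\in\mathcal{G}}\beta_2^2(\mu,3Q)\,\frac{\diam Q}{\mu(Q)}.
\]
In the first identity the additional sum is at most $\#\mathcal{G}<\infty$ because $\beta_2^2(\mu,3Q)\in[0,1]$. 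In the second, every summand is finite: if $\mu(Q)>0$ it is bounded by $\diam Q/\mu(Q)<\infty$, again using $\beta_2^2(\mu,3Q)\le 1$; if $\mu(Q)=0$ it equals $0$ by the convention $0/0=0$. As $\mathcal{G}$ is finite, the additional sum is finite here too.

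Consequently each of $J_2(\mu,r',x)$ and $\tJ_2(\mu,r',x)$ is obtained from its counterpart at scale $r$ by adding a finite nonnegative number, so $J_2(\mu,r,x)<\infty\iff J_2(\mu,r',x)<\infty$ and $\tJ_2(\mu,r,x)<\infty\iff \tJ_2(\mu,r',x)<\infty$ (and likewise with ``$<\infty$'' replaced by ``$=\infty$''). Since $0<r<r'$ were arbitrary, and any two positive scales can be ordered, the four sets in the statement are independent of $r$. There is essentially no obstacle here; the only points needing a moment's care are that the split of the series is legitimate (it is, the terms being nonnegative, so no $\infty-\infty$ arises) and that in the density-normalized case the added terms are never infinite — which is exactly where the bound $\beta_2\le 1$ and the convention for $\mu(Q)=0$ enter.
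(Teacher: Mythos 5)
Your proof is correct and takes essentially the same approach as the paper, whose entire argument is the observation that changing $r$ inserts or deletes at most finitely many terms from the defining sums. Your extra care in checking that each inserted term of $\tJ_2(\mu,\cdot,x)$ is itself finite (via $\beta_2\le 1$ and the convention for cubes with $\mu(Q)=0$) is precisely the point that makes that one-line observation legitimate.
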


\begin{proof} With $\mu$ and $x$ fixed, changing the value of $r>0$ inserts or deletes at most a finite number of terms in the defining sums for $J_2(\mu,r,x)$ and $\tJ_2(\mu,r,x)$.\end{proof}

Below $\side Q:=\diam Q/\sqrt{n}$ denotes the side length of a cube in $\RR^n$.

\begin{lemma}\label{l:c1} Let $\mu$ be a locally finite Borel measure and let $x\in\RR^n$. If $\uD1(\mu,x)<\infty$ and $\tJ_2(\mu,x)<\infty$, then $J_2(\mu,x)<\infty$.\end{lemma}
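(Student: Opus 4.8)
The plan is to compare the two Jones functions term by term, using the density hypothesis to control the extra factor $\diam Q/\mu(Q)$ that distinguishes $\tJ_2$ from $J_2$. Fix $x\in\RR^n$ with $\uD1(\mu,x)<\infty$ and $\tJ_2(\mu,x)<\infty$. Since $\uD1(\mu,x)<\infty$, there exist a constant $M<\infty$ and a radius $r_0>0$ such that $\mu(B(x,r))\leq M\omega_1 r$ for all $0<r\leq r_0$. For a dyadic cube $Q\ni x$ we have $Q\subset B(x,\diam Q)$, so $\mu(Q)\leq \mu(B(x,\diam Q))\leq M\omega_1\diam Q$ whenever $\diam Q\leq r_0$; equivalently $\diam Q/\mu(Q)\geq (M\omega_1)^{-1}$ for all such cubes (interpreting this as the trivial bound if $\mu(Q)=0$, in which case the corresponding $\beta_2$ term vanishes by convention in both sums).

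With this inequality in hand, for every dyadic cube $Q$ with $x\in Q$ and $\diam Q\leq r_0$ one has
\[
\beta_2^2(\mu,3Q)\ \leq\ M\omega_1\,\beta_2^2(\mu,3Q)\,\frac{\diam Q}{\mu(Q)}.
\]
Summing over all dyadic $Q\ni x$ with side length at most some fixed $r\leq r_0/\sqrt n$ (so that $\diam Q\leq r_0$), the left-hand sum is $J_2(\mu,r,x)$ and the right-hand sum is bounded by $M\omega_1\,\tJ_2(\mu,r,x)\leq M\omega_1\,\tJ_2(\mu,x)<\infty$, where the last inequality uses Lemma \ref{l:truncate} to pass between starting scales. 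Hence $J_2(\mu,r,x)<\infty$, and then $J_2(\mu,x)=J_2(\mu,1,x)<\infty$ by Lemma \ref{l:truncate} again.

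I do not expect any serious obstacle here: the only point requiring a little care is the bookkeeping between the starting scale $1$ appearing in the definitions of $J_2(\mu,\cdot)$ and $\tJ_2(\mu,\cdot)$ and the scale $r_0$ coming from the density bound, but Lemma \ref{l:truncate} says precisely that finiteness is insensitive to this choice, so only finitely many terms are affected and they contribute a finite amount. One should also remember to handle the degenerate case $\mu(Q)=0$ separately, where $\beta_2(\mu,3Q)=0$ by convention so the term is harmless in both sums.
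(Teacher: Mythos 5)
Your argument is correct and is essentially identical to the paper's proof: both use the upper density bound to get $\mu(Q)\leq\mu(B(x,\diam Q))\lesssim\diam Q$ for small dyadic cubes containing $x$, compare the two Jones sums termwise, and invoke Lemma \ref{l:truncate} to pass between starting scales. The extra factor $\omega_1$ and the remark about $\mu(Q)=0$ are harmless cosmetic differences.
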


\begin{proof} Let $\mu$ be a locally finite Borel measure on $\RR^n$, and assume that at some $x\in\RR^n$, both $\uD1(\mu,x)<\infty$ and $\tJ_2(\mu,x)<\infty$. Since $\uD1(\mu,x)<\infty$ there exist constants $M<\infty$ and $r_0>0$ such that $\mu(B(x,r))\leq Mr$ for all $0<r\leq r_0$. In particular, \begin{equation*}\mu(Q)\leq \mu(B(x,\diam Q))\leq M\diam Q\end{equation*} for every cube $Q$ containing $x$ with $\side Q\leq r_0/\sqrt{n}$ (that is, $\diam Q\leq r_0$). Hence \begin{equation}\label{e:orange}\begin{split} J_2(\mu,r_0/\sqrt{n},x) &= \sum_{\stackrel{Q\in\Delta(\RR^n)}{\side Q\leq r_0/\sqrt{n}}} \beta_2^2(\mu,3Q)\chi_Q(x)\\ &\leq M \sum_{\stackrel{Q\in\Delta(\RR^n)}{\side Q\leq r_0/\sqrt{n}}} \beta_2^2(\mu,3Q) \frac{\diam Q}{\mu(Q)}\chi_Q(x) = M \tJ_2(\mu,r_0/\sqrt{n},x).\end{split}\end{equation} But $\tJ_2(\mu,r_0/\sqrt{n},x)<\infty$ by Lemma \ref{l:truncate}, since $\tJ_2(\mu,x)<\infty$. Hence $J_2(\mu,r_0/\sqrt{n},x)<\infty$ by (\ref{e:orange}). Therefore, since $J_2(\mu,r_0/\sqrt{n},x)<\infty$, we have $J_2(\mu,x)<\infty$ by Lemma \ref{l:truncate}.
\end{proof}

\section{Proofs of the Main Results}\label{s:A}

The proof of Theorem \ref{t:A} is based on Proposition \ref{prop1L}. Roughly speaking, this proposition says that if the lower density of a finite measure $\nu$ is uniformly bounded away from 0 along a subset $E$ of a rectifiable curve $\Gamma\subset\RR^n$, then $\tJ_2(\nu,\cdot)|_E$ has finite norm in $L^1(\nu)$. In particular, $\tJ_2(\nu,x)<\infty$ at $\nu$-a.e.~ $x\in E$.

\begin{proposition}\label{prop1L} Let $\nu$ be finite Borel measure on $\RR^n$ and let $\Gamma$ be a rectifiable curve. If $E\subset\Gamma$ is Borel and there exists a constant $c_E>0$ such that $\nu(B(x,r))\geq c_E r$ for all $x\in E$ and for all $0<r\leq r_0$, then \begin{equation*} \int_{E} \tJ_2(\nu,r_0,x)\,d\nu(x)\lesssim \Haus^1(\Gamma)+\nu(\RR^n\setminus \Gamma),\end{equation*} where the implied constant depends only on $n$ and $c_E$ (see (\ref{const1L})).
\end{proposition}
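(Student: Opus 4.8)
The plan is to bound the $L^1(\nu)$-norm of $\tJ_2(\nu,r_0,\cdot)$ over $E$ by switching the order of summation and integration, so that the key quantity becomes $\sum_Q \beta_2^2(\nu,3Q)\,\diam Q \cdot \frac{\nu(E\cap Q)}{\nu(Q)}$, where $Q$ ranges over dyadic cubes of side length at most $r_0$. Since $\frac{\nu(E\cap Q)}{\nu(Q)}\leq 1$, this is at most $\sum_Q \beta_2^2(\nu,3Q)\,\diam Q$, but that sum need not converge — there is no reason for $\beta_2$ of the \emph{measure} $\nu$ to be summable, since $\nu$ may put lots of mass off $\Gamma$. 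The whole point of restricting to $E$ and using the density hypothesis is to get past this.

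First I would reduce $\beta_2(\nu,3Q)$ to a purely geometric $\beta$ of the set $\Gamma$ (or of $E$) at a comparable scale, which is the object controlled by the Traveling Salesman Theorem. The mechanism: if $Q$ is a cube with $\nu(E\cap Q)>0$, pick a best line $\ell$ for the set $E\cap 3Q$ (or, more robustly, argue that on any cube where $\nu\res 3Q$ is mostly concentrated near its own best line $\ell$, the set $E\cap 3Q$ must lie near $\ell$ too, using the lower density bound $\nu(B(x,r))\geq c_E r$ to say that if a point $x\in E\cap 3Q$ were far from $\ell$, a whole ball around it of radius $\sim \dist(x,\ell)$ would carry mass $\gtrsim c_E \dist(x,\ell)$, contradicting the smallness of $\int_{3Q}(\dist(y,\ell)/\diam Q)^2\,d\nu$ when $\nu(Q)$ is not too small relative to $\diam Q$). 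This is exactly where the density normalization $\diam Q/\nu(Q)$ earns its keep: it converts the $L^2$ average against $d\nu/\nu(Q)$ into something comparable to the $L^\infty$-type $\beta_\Gamma$ via the lower regularity of $\nu$ on $E$. Concretely I expect an inequality of the shape
\begin{equation*}
\beta_2^2(\nu,3Q)\,\frac{\diam Q}{\nu(Q)}\;\nu(E\cap Q)\;\lesssim_{c_E}\; \beta_{E}(CQ)^2\,\diam Q \;+\;(\text{error terms}),
\end{equation*}
for some fixed dilation factor $C$ (say $C=9$ or so), valid for all $Q$ with $\side Q\leq r_0$.

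The error terms will be the cubes where $\nu(Q)$ is small — say $\nu(Q)\leq \epsilon\,\diam Q$ — or where $\nu$ near $x$ has a lot of mass off $\Gamma$; here the argument above breaks because the $L^2$ average doesn't see a single far point. For these I would use a separate, cruder bound: $\beta_2^2(\nu,3Q)\leq 1$ always, so the contribution of such a cube $Q$ containing $x$ is at most $\diam Q/\nu(Q)$ times $\nu(E\cap Q)$; summing over the dyadic cubes containing a fixed $x\in E$ with $\nu(Q)$ comparably small and using the density lower bound again (which gives a geometric-type control on how many scales can have $\nu(Q)$ small while still $\geq c_E \side Q$... actually more carefully: decompose $\nu = \nu\res\Gamma + \nu\res(\RR^n\setminus\Gamma)$ and handle the off-$\Gamma$ part by noting $\int_E \tJ_2(\nu\res(\RR^n\setminus\Gamma),r_0,x)\,d\nu(x)$ — hmm). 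The cleanest route is probably: split the integrand's numerator $\beta_2^2(\nu,3Q)$ using that the mass of $\nu$ on $3Q$ is $\nu(3Q\cap\Gamma)+\nu(3Q\setminus\Gamma)$; the first part is handled by the geometric reduction above plus Corollary \ref{c:tst} applied to $E\subset\Gamma$, which gives $\sum_Q \beta_E(CQ)^2\diam Q\lesssim \Haus^1(\Gamma)$; the second part contributes at most a constant times $\nu(\RR^n\setminus\Gamma)$ after summing, because each such cube's excess mass $\nu(3Q\setminus\Gamma)$ gets counted boundedly many times across scales (finite overlap of the $3Q$ at each scale, geometric decay in the density-normalized weight once the lower bound $\nu(Q)\gtrsim c_E\side Q$ kicks in).

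The main obstacle I anticipate is making the geometric reduction rigorous and uniform: turning ``$\beta_2(\nu,3Q)$ small, $\nu(Q)$ not too small" into ``$E\cap 3Q$ lies in a thin tube" requires choosing the Chebyshev/threshold parameters (how small is ``small", how the $L^2$ average on a ball of radius $\sim\dist(x,\ell)$ around an offending point $x$ compares to $\beta_2^2(\nu,3Q)\nu(3Q)$) so that the constants only depend on $n$ and $c_E$, and so that the ``bad" cubes excluded by the threshold have total weighted contribution controlled by $\Haus^1(\Gamma)+\nu(\RR^n\setminus\Gamma)$ rather than blowing up. A secondary technical point is the bookkeeping of dilation factors — $3Q$ versus $CQ$ versus the cubes in Corollary \ref{c:tst} — but that is exactly what Corollary \ref{c:tst} is designed to absorb, so I would invoke it with $a$ equal to whatever dilation the geometric step forces. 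Once Proposition \ref{prop1L} is in hand, Theorem \ref{t:A} follows by decomposing an arbitrary $1$-rectifiable $\mu$ into countably many pieces $\mu\res(f_i(E_i))$, restricting further to sets where the lower density is bounded below (possible $\mu$-a.e.\ by Lemma \ref{l:LD}) and where $\mu$ is finite, applying the proposition to each, and using Lemma \ref{l:truncate} to remove the dependence on the starting scale $r_0$.
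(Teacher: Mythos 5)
Your overall architecture matches the paper's: after interchanging the sum and the integral, you bound $\sum_Q\beta_2^2(\nu,3Q)\,\diam Q\,\nu(E\cap Q)/\nu(Q)$ by splitting each cube's contribution into a piece dominated by a geometric beta number of the curve at a comparable dilated scale---summable to $\lesssim\Haus^1(\Gamma)$ by Corollary \ref{c:tst}---plus a piece charged to the mass of $\nu$ off $\Gamma$, with the hypothesis $\nu(B(x,r))\geq c_Er$ used to trade $1/\nu(3Q)$ for $\lesssim1/(c_E\diam Q)$. This is exactly the paper's decomposition into $\Delta_\Gamma$ and $\Delta_2$. But two of the steps as you describe them do not work. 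The ``geometric reduction'' you propose via Chebyshev (a point of $E\cap3Q$ far from the best $L^2$ line carries mass $\gtrsim c_E\dist(x,\ell)$ nearby, contradicting smallness of $\beta_2$) proves that the \emph{set} is flat where the \emph{measure} is flat, i.e.\ $\beta_E\lesssim$ a function of $\beta_2(\nu,3Q)$. That is the direction used in sufficiency results like Pajot's, and it is the reverse of what you need here: you must bound $\beta_2(\nu,3Q)$ \emph{above} by $\beta_\Gamma(aQ)$ plus an off-curve error, and no lower density bound can do that, since $\nu$ may carry arbitrary mass far from $\Gamma$. The correct mechanism is a one-line triangle inequality: choose $\ell$ nearly optimal for $\beta_\Gamma(aQ)$ with $a$ large enough (e.g.\ $a=3+6\sqrt{n}$) that every $x\in3Q$ has its nearest point of $\Gamma$ inside $aQ$, and use $\dist(x,\ell)\leq\dist(x,\Gamma)+\beta_\Gamma(aQ)\diam aQ$ inside the $L^2$ integral. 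The density hypothesis is used exactly once, to get $\nu(3Q)\geq c_E\side Q$, not to convert $L^2$ into $L^\infty$.

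The second, more serious gap is in the off-curve summation. The claim that ``each such cube's excess mass gets counted boundedly many times across scales'' is false: a point $x$ with $\dist(x,\Gamma)=d>0$ lies in $3Q$ for boundedly many cubes at each of \emph{all} scales $2^{-k_0}\gtrsim d$ up to $r_0$, which is about $\log_2(r_0/d)$ scales and is unbounded as $d\to0$. In particular, your fallback bound $\beta_2^2(\nu,3Q)\leq1$ on ``bad'' cubes leaves you with $\sum_{Q\ni x}\diam Q$ over a potentially infinite range of scales, which diverges. What makes the sum converge is the quadratic weight $(\dist(x,\Gamma)/\diam 3Q)^2\approx4^{-(k-k_0)}$ surviving from the far-field part of the $L^2$ integral; this decays geometrically in the scale and, after Fubini (or the paper's Whitney decomposition of $\RR^n\setminus\Gamma$ into families $\cT_k$), yields $\lesssim_{n,c_E}\nu(\RR^n\setminus\Gamma)$. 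The density normalization contributes only a single factor of $(\diam Q)^{-1}$ and does not by itself produce any decay across scales, so the bookkeeping must retain the distance-squared weight rather than discard it.
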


\begin{proof}[Proof of Proposition \ref{prop1L}]

Let $\nu$ be a finite Borel measure on $\RR^n$ and let $\Gamma$ be a rectifiable curve in $\RR^n$. Assume that $E\subset\Gamma$ is Borel and there exists a constant $c_E>0$ such that $\nu(E\cap B(x,r))\geq c_E r$ for all $x\in E$ and for all $0<r \leq r_0$. Our objective is to find an upper bound for $\int_E\tJ_2(\nu,r_0,x)\,d\nu(x)$ in terms of the ambient dimension $n$, the lower Ahlfors regularity constant $c_E$, the length of $\Gamma$, and $\nu(\RR^n\setminus\Gamma)$.

For the duration of the proof, we let $a>3$ and $\varepsilon>0$ denote fixed constants, depending on at most $n$, which will be specified after (\ref{e:apple2}). To proceed divide the dyadic cubes $\Delta(\RR^n)$ into three subfamilies $\Delta_0$, $\Delta_\Gamma$ and $\Delta_2$, as follows:
\begin{align*}\Delta_0&=\{Q\in\Delta(\RR^n):\side Q>r_0\text{ or }\nu(E\cap Q)=0\},\\
\Delta_\Gamma&=\{Q\in\Delta(\RR^n): \side Q\leq r_0,\, \nu(E\cap Q)>0\text{ and } \varepsilon\beta_2(\nu,3Q)\leq \beta_{\Gamma}(aQ)\},\text{ and}\\
\Delta_2&=\{Q\in\Delta(\RR^n):\side Q\leq r_0,\, \nu(E\cap Q)>0\text{ and } \beta_{\Gamma}(aQ) < \varepsilon\beta_2(\nu,3Q)\}.\end{align*} Thus, the family $\Delta_0$ consists of all of the dyadic cubes in $\RR^n$ that do not contribute to $\int_E\tJ_2(\nu,r_0,x)\,d\nu(x)$. And, of the remainder, the families $\Delta_\Gamma$ and $\Delta_2$ consist of the cubes for which either $\beta_{\Gamma}(aQ)$ or $\varepsilon\beta_2(\nu,3Q)$ is the dominant quantity, respectively. Reading off the definitions of $\tJ_2(\nu,r_0,\cdot)$,  $\Delta_0$, $\Delta_\Gamma$ and $\Delta_2$, it follows that\begin{align}
\notag \int_{E}\tJ_2(\nu,r_0,x)\,d\nu(x)
 &= \sum_{Q} \beta_2^2(\nu,3Q)\frac{\diam Q}{\nu(Q)}\int_{E}\chi_Q(x)\,d\nu(x)\\
\notag &=  \sum_{Q\in\Delta_\Gamma\cup\Delta_2} \beta_2^2(\nu,3Q)\diam Q \frac{\nu(E\cap Q)}{\nu(Q)} \\
 &\leq
  \underbrace{\varepsilon^{-2}\sum_{Q\in\Delta_\Gamma}\beta_{\Gamma}^2(aQ)\diam Q}_I +
  \underbrace{\sum_{Q\in\Delta_2}\beta_2^2(\nu,3Q)\diam Q}_{\two}. \label{I-II}
\end{align} We shall estimate the terms $I$ and $\two$ separately. The former will be controlled by $\Haus^1(\Gamma)$ and the latter will be controlled by $\nu(\RR^n\setminus\Gamma)$.

To estimate $I$, we note that by Corollary \ref{c:tst},
\begin{equation}\label{Iest} I \leq \varepsilon^{-2} \sum_{Q\in\Delta(\RR^n)}\beta_\Gamma^2(aQ)\diam Q \leq C'\varepsilon^{-2}\Haus^1(\Gamma),\end{equation} where $C'$ is a finite constant determined by $n$ and $a$.

In order to estimate $\two$, decompose $\RR^n\setminus \Gamma$ into a family $\cT$ of Whitney cubes with the following specifications. \begin{itemize}
\item The union over all sets in $\cT$ is $\RR^n\setminus \Gamma$.
\item Each set in $\cT$ is a half-open cube in $\RR^n$ of the form $(a_1,b_1]\times\dots\times(a_n,b_n]$.
\item If $T_1,T_2\in\cT$, then either $T_1=T_2$ or $T_1\cap T_2=\emptyset$.
\item If $T\in\cT$, then $\dist(T,\Gamma)\leq \diam T\leq 4\dist(T,\Gamma)$.
\end{itemize} (To obtain this decomposition, modify the standard Whitney decomposition in Stein [St] by replacing each closed cube with the corresponding half-open cube.) Here $\dist(T,\Gamma)=\inf_{x\in T}\inf_{y\in \Gamma}|x-y|$. For each $k\in\ZZ$, we define \begin{equation*} \cT_k=\{T\in\cT:2^{-k-1}<\dist(T,\Gamma)\leq 2^{-k}\}.\end{equation*} Also for every cube $Q$, we set $\cT(Q)=\{T\in\cT:\nu(Q\cap T)>0\}$ and $\cT_k(Q)=\cT_k\cap\cT(Q)$. Our plan is to first estimate $\beta_2^2(\nu,3Q)\diam Q$ for each $Q\in\Delta_2$ and then estimate $\two$.

Fix $Q\in\Delta_2$, say with $\side Q=2^{-k_0}\leq r_0$. We remark that if $T\in\cT_k(3Q)$, then $k\geq k_1(k_0):=k_0-1-\lfloor \log_23\sqrt{n}\rfloor$ (to derive this, bound the distance between a point in $T\cap 3Q$ and a point in $E\cap Q$ by $\diam 3Q$). Pick any line $\ell$ in $\RR^n$ such that \begin{equation}\begin{split}\label{e:apple0} \sup_{z\in \Gamma\cap aQ} \dist(z,\ell) \leq 2\beta_{\Gamma}(aQ)\diam aQ & <2\varepsilon \beta_2(\nu,3Q) \diam aQ \\ &=(2/3)a\varepsilon \beta_2(\nu,3Q)\diam 3Q.\end{split}\end{equation} To control $\beta_2^2(\nu,3Q)$, we divide $3Q$ into two sets, $N$ (``near") and $F$ (``far"), where \begin{equation*}N=\{x\in 3Q: \dist(x,\ell)\leq (2/3)a\varepsilon\beta_2(\nu,3Q)\diam 3 Q \}\end{equation*} and \begin{equation*}F=\{x\in 3Q :\dist(x,\ell)>(2/3)a\varepsilon\beta_2(\nu,3Q)\diam 3Q\}.\end{equation*} It follows that \begin{align} \notag \beta_2^2(\nu,3Q)
 &\leq \int_N \left(\frac{\dist(x,\ell)}{\diam 3Q}\right)^2\, \frac{d\nu(x)}{\nu(3Q)} + \int_F \left(\frac{\dist(x,\ell)}{\diam 3Q}\right)^2\,\frac{d\nu(x)}{\nu(3Q)}\\
 &\leq (2/3)^2a^2\varepsilon^2\beta_2^2(\nu,3Q)+\int_F\left(\frac{\dist(x,\ell)}{\diam 3Q}\right)^2 \, \frac{d\nu(x)}{\nu(3Q)} \label{e:apple1} \\
 &\leq 3(2/3)^2a^2\varepsilon^2\beta_2^2(\nu,3Q)+2\int_F\left(\frac{\dist(x,\Gamma\cap aQ)}{\diam 3Q}\right)^2\,\frac{d\nu(x)}{\nu(3Q)}, \label{e:apple2}\end{align} where to pass between (\ref{e:apple1}) and (\ref{e:apple2}) we used the triangle inequality, $(\ref{e:apple0})$, and the inequality $(p+q)^2\leq 2p^2+2q^2$. Note that $\dist(x,\Gamma)\leq \diam 3Q=3\sqrt{n}\side Q$ for all $x\in F$, because $F\subset 3Q$ and $\Gamma\cap Q\neq\emptyset$ (since $\nu(E\cap Q)>0$ for all $Q\in\Delta_2$). Hence, specifying $a=3+6\sqrt{n}$ and $3(2/3)^2a^2\varepsilon^2=1/2$ (or generally $a\geq 3+6\sqrt{n}$ and $\varepsilon \leq 3/2a\sqrt{6}$) ensures that \begin{equation*}\dist(x,\Gamma\cap aQ)=\dist(x,\Gamma)\quad\text{for all }x\in F\end{equation*} and \begin{equation*}\beta_2^2(\nu,3Q) \leq 4\int_F\left(\frac{\dist(x,\Gamma)}{\diam 3Q}\right)^2\,\frac{d\nu(x)}{\nu(3Q)}.\end{equation*} We now employ the Whitney decomposition. Since $\Gamma\cap 3Q\subset N$  (by (\ref{e:apple0})), we have $F\subset\bigcup_{T\in\cT(3Q)}T\cap 3Q= \bigcup_{k=k_1(k_0)}^\infty\bigcup_{T\in\cT_k(3Q)}T\cap 3Q$. Note that if $T\in\cT_k(3Q)$ and $x\in T$, then $\dist (x,\Gamma)\leq \dist(T,\Gamma)+\diam T\leq 5\dist(T,\Gamma)\leq 5\cdot2^{-k}$. Thus,
\begin{equation} \label{e:golf1} \beta_2^2(\nu,3Q)\leq  100\sum_{k=k_1(k_0)}^\infty\sum_{T\in\cT_k(3Q)} \left(\frac{2^{-k}}{\diam 3Q}\right)^2\,\frac{\nu(T\cap 3Q)}{\nu(3Q)}.\end{equation} To continue, recall that $\nu(E\cap Q)>0$. Hence, we can locate $z\in E\cap Q$ and use our assumption on $E$ to conclude that \begin{equation}\label{e:golf2}
\nu(3Q)\geq \nu(B(z,2^{-k_0}))\geq c_E2^{-k_0}=\frac{c_E}{\sqrt{n}}\diam Q.\end{equation} Therefore, combining (\ref{e:golf1}) and (\ref{e:golf2}), and writing $\diam 3Q=3\diam Q$, we obtain \begin{equation}\label{beta-est}\beta_2^2(\nu,3Q)\diam Q \leq \frac{100\sqrt{n}}{9c_E}\sum_{k=k_1(k_0)}^\infty\sum_{T\in\cT_k(3Q)} \left(\frac{2^{-k}}{\diam Q}\right)^2\nu(T\cap 3Q).
\end{equation} This estimate is valid for every cube $Q\in\Delta_2$.

Equipped with (\ref{beta-est}), we can now bound $\two$. Let $l\in\ZZ$ be the smallest integer with $2^{-l}\leq r_0$. For each $k\geq l$, write $\Delta_2(k)$ for the family of cubes in $\Delta_2$ with side length $2^{-k}$. Then
\begin{align*} \two
 &= \sum_{k_0=l}^\infty \sum_{Q\in\Delta_2(k_0)} \beta_2^2(\nu,3Q)\diam Q \\
 &\leq \frac{100\sqrt{n}}{9c_E}\sum_{k_0=l}^\infty \sum_{Q\in\Delta_2(k_0)}\sum_{k=k_1(k_0)}^\infty \sum_{T\in\cT_k(3Q)}\left(
 \frac{2^{-k}}{\sqrt{n}2^{-k_0}}\right)^2\nu(T\cap 3Q).
\end{align*} Let $N_0=4^n$ denote the maximum overlap of cubes $3Q$ and $3Q'$ where the cubes $Q$ and $Q'$ are (closed) dyadic cubes of equal side length. Then
\begin{align*}
 \two &\leq \frac{100N_0\sqrt{n}}{9c_E} \sum_{k_0=l}^\infty\sum_{k=k_1(k_0)}^\infty\sum_{T\in\cT_k} \left(\frac{2^{-k}}{\sqrt{n}2^{-k_0}}\right)^2\nu(T)\\
 &= \frac{100 N_0}{9c_E\sqrt{n}}\sum_{k_0=l}^\infty\sum_{k=k_1(k_0)}^\infty (1/4)^{k-k_0}\nu\left(\bigcup\cT_k\right).
\end{align*} (Here we used our assumption that the Whitney cubes in $\cT$ are pairwise disjoint.) Next, set $m:=k_1(k_0)-k_0=1+\lfloor \log_2 3\sqrt{n}\rfloor$. Then, switching the order of summation,
\begin{align*}
 \two &\leq \frac{100N_0}{9c_E\sqrt{n}} \sum_{k=l-m}^\infty \sum_{k_0=l}^{l+k-(l-m)} (1/4)^{k-k_0}\,\nu\left(\bigcup\cT_k\right) \\
 &\leq \frac{100N_0}{9c_E\sqrt{n}}\sum_{k=l-m}^\infty \sum_{j=-m}^\infty (1/4)^j \,\nu\left(\bigcup\cT_k\right)\\
 &\leq \frac{100N_0}{9c_E\sqrt{n}}\,\frac{4^{m+1}}{3}\,\nu(\RR^n\setminus\Gamma).\end{align*}
(Once again we used the disjointness of cubes in $\cT$.) Since $4^{m+1}\leq 4^{2+\log_2 3\sqrt{n}}=16\cdot9n$ and $N_0=4^n$, it follows that \begin{equation} \label{IIest} \two\leq (\tfrac{1600}{3}\cdot 4^{n}\sqrt{n}/c_E)\nu(\RR^n\setminus\Gamma).\end{equation}  Finally, combining (\ref{I-II}), (\ref{Iest}) and (\ref{IIest}), we conclude that \begin{equation*} \int_{E}\tJ_2(\nu,r_0,x)\,d\nu(x)\lesssim \Haus^1(\Gamma)+\nu(\RR^n\setminus\Gamma),\end{equation*} where the implied constant depends only on $n$ and  $c_E$. More explicitly, \begin{equation}\label{const1L}\int_E\tJ_2(\nu,r_0,x)\,d\nu(x)\leq (8+16\sqrt{n})C'\Haus^1(\Gamma)+(\tfrac{1600}{3}\cdot4^{n}\sqrt{n}/c_E)\,\nu(\RR^n\setminus\Gamma),\end{equation} where $C'$ is the constant from Corollary \ref{c:tst} with $a=3+6\sqrt{n}$ and is exponential in the dimension $n$.\end{proof}

We are now ready to prove Theorem \ref{t:A} and Corollary \ref{c:B}.

\begin{proof}[Proof of Theorem \ref{t:A}] Let $\mu$ be a locally finite Borel measure on $\RR^n$ and assume that $\mu$ is 1-rectifiable. First, because $\mu$ is 1-rectifiable, we can find a countable family $\{\Gamma_i\}_{i=1}^\infty$ of rectifiable curves such that $\mu$ gives full mass to $\bigcup_{i=1}^\infty\Gamma_i$. Second, because $\lD1(\mu,\cdot)>0$ $\mu$-a.e.~ by Lemma \ref{l:LD}, the measure $\mu$ gives full mass to $\bigcup_{j=1}^\infty\bigcup_{k=1}^\infty E_{j,k}$, where the set $E_{j,k}=\{x\in\RR^n: \mu(B(x,r))\geq 2^{-j}r$ for all $r\in(0,2^{-k}]\}$. Thus, to establish Theorem \ref{t:A}, it suffices to prove that $\tJ_2(\mu,x)<\infty$ at $\mu$-a.e.~ $x\in\Gamma_i\cap E_{j,k}$ for all $i,j,k\geq 1$.

Suppose that $\Gamma=\Gamma_i$ and $E=\Gamma_i\cap E_{j,k}$ for some $i,j,k\geq 1$. Let $\Delta$ denote the collection of all dyadic cubes $Q$ in $\RR^n$ such that $\mu(E\cap Q)>0$ and $\side Q\leq 2^{-k}$. Define the measure \begin{equation*}\nu:=\mu\res\bigcup_{Q\in\Delta}3Q.\end{equation*} First observe that $\nu$ has bounded support, since $E$ is bounded. Hence $\nu$ is finite, because $\mu$ is locally finite. Second note that for every $x\in E$ there exists $Q_x\in \Delta$ such that $x\in Q_x$, $\side Q_x=2^{-k}$ and $B(x,2^{-k})\subset 3Q_x$. We conclude that $\nu(B(x,r))=\mu(B(x,r))\geq 2^{-j}r$ for all $r\in(0,2^{-k}]$. Thus  \begin{align*}\int_{E}\tJ_2(\mu,2^{-k},x)\,d\mu(x)
&=\sum_{Q}\beta_2^2(\mu,3Q)\frac{\diam Q}{\mu(Q)}\int_{E}\chi_Q(x) \,d\mu(x)\\
&=\sum_{Q}\beta_2^2(\nu,3Q)\frac{\diam Q}{\nu(Q)}\int_{E}\chi_Q(x) \,d\nu(x)\\
&=\int_E\tJ_2(\nu,2^{-k},x)\,d\nu(x) \lesssim \Haus^1(\Gamma)+\nu(\RR^n\setminus\Gamma)<\infty,\end{align*} by Proposition \ref{prop1L}. In particular, we have $\tJ_2(\mu,2^{-k},x)<\infty$ at $\mu$-a.e.~ $x\in E$. Therefore, $\tJ_2(\mu,x)<\infty$ at $\mu$-a.e.~ $x\in E$, by Lemma \ref{l:truncate}.
\end{proof}

\begin{proof}[Proof of the Corollary \ref{c:B}] Let $\mu$ be a locally finite Borel measure on $\RR^n$, and assume that $\mu$ is 1-rectifiable and $\mu\ll \Haus^1$. On one hand, since $\mu$ is 1-rectifiable, we have $\tJ_2(\mu,\cdot)<\infty$ $\mu$-a.e.~ by Theorem \ref{t:A}. On the other hand, since $\mu\ll \Haus^1$, we have $\uD1(\mu,\cdot)<\infty$ $\mu$-a.e.~ by Lemma \ref{l:UD}. Therefore, $J_2(\mu,\cdot)<\infty$ $\mu$-a.e.~ by Lemma \ref{l:c1}.
\end{proof}

To wrap up this section, we make two comments about variations of Proposition \ref{prop1L}, Theorem \ref{t:A} and Corollary \ref{c:B} and pose an open problem.

\begin{remark} The proof of Proposition \ref{prop1L} carries through if, rather than use equation (\ref{e:dnJones}), we defined the density-normalized Jones function $\tJ_2(\mu,r,x)$ by  \begin{equation*} \sum_{\stackrel{Q\in\Delta(\RR^n)}{\side Q\leq r}} \beta_2^2(\nu,\lambda Q)\frac{\diam Q}{\nu(Q)}\chi_Q(x)\end{equation*} for some $\lambda>1$ arbitrary. Under this scenario, the constant in (\ref{IIest}) blows up as $\lambda\rightarrow 1$.
\end{remark}

\begin{remark} In the proof of Proposition \ref{prop1L}, we assumed that the dyadic cubes used to define the density-normalized Jones function $\tJ_2(\mu,r,\cdot)$ were closed cubes.  We could have worked with open or half-open dyadic cubes instead, the only change to the proof being that $N_0=4^n$ (closed cubes) improves to $N_0=3^n$ (open or half-open cubes). Therefore, Theorem \ref{t:A} and Corollary \ref{c:B} are true independent of whether the Jones functions are defined using closed, half-open or open dyadic cubes.
\end{remark}

\begin{problem} Formulate and prove a version of Theorem \ref{t:A} in infinite-dimensional Hilbert space, or show that such a generalization cannot exist. (See Remark \ref{r:hilbert}.)
\end{problem}

\section{Related Work}
\label{s:related}

We conclude by discussing some relevant prior work. Recall that the \emph{Menger curvature} $c(x,y,z)$ of three points $x,y,z\in\RR^n$ is defined to be the inverse of the radius of the circle that passes through $x$, $y$ and $z$. If $x$, $y$ and $z$ are collinear, then $c(x,y,z)=0$. In \cite{Leger}, L\'eger proved that an integrability condition on Menger curvature is a sufficient test for certain absolutely continuous measures to be 1-rectifiable.

\begin{theorem}[\cite{Leger}] \label{t:leger} Suppose $E\subset\RR^n$ is Borel, $0<\Haus^1(E)<\infty$ and $\mu=\Haus^1\res E$. If $\iiint c^2(x,y,z)\, d\mu(x)d\mu(y)d\mu(z)<\infty$, then $\mu$ is 1-rectifiable.\end{theorem}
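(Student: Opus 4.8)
The plan is to deduce Theorem~\ref{t:leger} from a local ``big piece'' statement: there is $\varepsilon_0=\varepsilon_0(n)>0$ and, for each $\varepsilon>0$, a threshold $\eta=\eta(\varepsilon,n)>0$, such that whenever $\nu$ is a Borel measure supported in $B(0,1)$ with $\nu(B(0,1))\approx 1$, with density ratios $\nu(B(x,r))/r$ comparable to $1$ on $\spt\nu$, with $\beta_2(\nu,B(0,1))\leq\eta$, and with $\iiint c(x,y,z)^2\,d\nu(x)\,d\nu(y)\,d\nu(z)\leq\eta$, there is a Lipschitz graph $G$ (with constant $\leq 1$, say) carrying $\nu(G)\geq(1-\varepsilon)\nu(B(0,1))$. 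Granting this, Theorem~\ref{t:leger} follows: after discarding a subset of small $\Haus^1$-measure one may assume $\mu=\Haus^1\res E$ is density-regular, and then --- using the Menger-curvature estimate below together with a Borel--Cantelli argument applied to the finite quantities $\int_E\frac1r\iiint_{B(x,r)^3}c^2\,d\mu^3\,d\mu(x)$ and $\sum_{Q\ni x}\beta_2(\mu,3Q)^2\diam Q$ --- at $\mu$-a.e.\ $x$ there are arbitrarily small scales $r$ at which the rescaled copy of $\mu\res B(x,r)$ meets the hypotheses of the local statement; a Vitali covering by such good balls then places all but an $\varepsilon$-fraction of $\mu$ on countably many Lipschitz graphs, and letting $\varepsilon\to 0$ along a sequence shows $\mu$ is carried by countably many Lipschitz graphs, i.e.\ is $1$-rectifiable.

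The engine of the local statement is the bound relating Menger curvature to $\beta$ numbers: for a cube $Q$ with $\nu(Q)\approx\diam Q$,
\begin{equation*}
\beta_2(\nu,3Q)^2\,\nu(Q)\lesssim \iiint_{x,y,z\in 9Q}c(x,y,z)^2\,d\nu(x)\,d\nu(y)\,d\nu(z),
\end{equation*}
which holds because three points of $9Q$ all lying at distance $\gtrsim\beta\diam Q$ from every line through $Q$ span a circle of radius $\lesssim\diam Q/\beta$. Summing over all dyadic $Q$ and using that the dilates $9Q$ have bounded overlap at each scale yields the Carleson-type estimate $\sum_Q\beta_2(\nu,3Q)^2\diam Q\lesssim \iiint c^2\,d\nu^3\leq\eta$.

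To construct the graph, run a stopping-time (corona) argument over the dyadic cubes. Starting from the $L^2$-best line $\ell_{Q_0}$ at the top, assign inductively to each dyadic $Q$ meeting $\spt\nu$ an approximating line $\ell_Q$: if $\nu\res Q$ stays within a small fixed multiple of $\diam Q$ of the parent's line $\ell_{\widehat Q}$, call $Q$ \emph{good} and let $\ell_Q$ be the new best line for $\nu\res Q$; otherwise call $Q$ a \emph{stopping cube} and do not subdivide it. Since at a good cube the angle between $\ell_Q$ and $\ell_{\widehat Q}$ is $\lesssim\beta_2(\nu,3\widehat Q)$, the Carleson estimate says the good cubes carry a coherent, slowly turning field of lines. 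Let $Z\subset\spt\nu$ be the set of points lying in infinitely many good cubes, never in a stopping cube, and not drifting transversally away from this field. The main work --- and the step I expect to be the principal obstacle --- is to promote the Carleson control of $\sum\beta_2^2\diam Q$ to the statement that $\nu$-almost all of $Z$ projects \emph{injectively}, with uniformly bounded-slope fibers, onto $\ell_{Q_0}$: one cannot merely add up angles along a single branch (that series need not converge), so one argues via an $L^2$/square-function estimate that rules out ``folding'' of the approximating lines, showing that $Z$ lies on a Lipschitz graph. This is the technical heart of L\'eger's proof.

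It remains to see that the mass off the graph is at most $\varepsilon\,\nu(B(0,1))$. Each maximal stopping cube $Q$ has $\beta_2(\nu,3\widehat Q)\gtrsim 1$ (its stopping threshold), so $\nu(Q)\lesssim\beta_2(\nu,3\widehat Q)^2\diam Q$; the maximal stopping cubes being pairwise disjoint, their total mass is $\lesssim\sum_Q\beta_2(\nu,3Q)^2\diam Q\lesssim\eta$. The transversally drifting points are estimated by Chebyshev against the same square function and again contribute mass $\lesssim\eta$. Hence $\nu(Z)\geq(1-C\eta)\,\nu(B(0,1))\geq(1-\varepsilon)\nu(B(0,1))$ once $\eta=\eta(\varepsilon,n)$ is chosen small, proving the local statement and completing the proof. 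Besides the Lipschitz-graph construction, the subtler bookkeeping is to make the reduction to the normalized local picture rigorous --- producing, at almost every point, a single small scale at which flatness, curvature-smallness, and density-regularity hold simultaneously --- and then patching the resulting graphs across a Vitali cover.
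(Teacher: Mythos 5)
The paper does not prove Theorem \ref{t:leger}: it is quoted in \S\ref{s:related} purely as background, and the proof lives entirely in L\'eger's paper \cite{Leger}. So the only question is whether your sketch stands on its own, and it does not. What you candidly label ``the technical heart'' --- promoting the Carleson condition $\sum_Q\beta_2(\nu,3Q)^2\diam Q\lesssim\eta$ to the statement that the non-stopped set $Z$ projects injectively, with uniformly bounded slope, onto $\ell_{Q_0}$ --- is not a step that can be deferred. It is the bulk of L\'eger's argument (building the Lipschitz function by gluing affine approximations over a partition of unity subordinate to the stopping regions, then controlling its increments by square-function estimates), and without it the proposal is an outline of a proof rather than a proof.

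Two further points would fail as written. First, the reduction to a local model in which $\nu(B(x,r))/r$ is comparable to $1$ from both sides is not available: for $\mu=\Haus^1\res E$ with $\Haus^1(E)<\infty$ the upper density is bounded a.e., but the lower density could a priori vanish on a set of positive measure (ruling this out is part of what the theorem proves), so you cannot ``discard a subset of small $\Haus^1$-measure'' to obtain two-sided density regularity. L\'eger's main proposition assumes only the upper bound $\mu(B(x,r))\leq C_0 r$ together with a lower bound on the total mass, precisely to avoid this circularity; relatedly, your inequality $\beta_2(\nu,3Q)^2\,\nu(Q)\lesssim\iiint_{(9Q)^3}c^2\,d\nu^3$ requires a lower bound on the $\nu\times\nu$-mass of well-separated pairs $(y,z)$ in $9Q$, which your one-line justification about three mutually distant points does not supply. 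Second, the stopping-cube mass estimate is wrong as stated: violating ``$\nu\res Q$ stays within a small multiple of $\diam Q$ of $\ell_{\widehat Q}$'' means only that some point of $\spt\nu\cap Q$ is far from the parent's line, which does not force $\beta_2(\nu,3\widehat Q)\gtrsim 1$ --- an $L^2$ beta number is insensitive to a single point, and in any case it is an infimum over all lines rather than the deviation from the particular line $\ell_{\widehat Q}$. L\'eger's stopping conditions also include cubes where the density degenerates and where consecutive best lines tilt too much, and each family is disposed of by a separate estimate; none of this bookkeeping is present in the proposal.
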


Placed besides one another, Theorem \ref{t:A} and Corollary \ref{c:B} (necessary conditions) and Theorem \ref{t:leger} (a sufficient condition) highlight the importance of ``curvature" to the theory of rectifiability.  For an interpretation of beta numbers as a measure of curvature, for the connection between beta numbers and Menger curvature, and for a survey of related results, we refer the reader to \cite[Chapter 3]{Pajot} and Schul \cite{Schul-AR}. Also see Hahlomaa \cite{Hahl-metric} for a version of Theorem \ref{t:leger} that is valid in metric spaces.

In \cite{Lerman}, Lerman proved that uniform control on an $L^2$ Jones function ensures that a measure gives positive mass to a rectifiable curve. Moreover, this result is quantitative. To give a precise statement of Lerman's ``$L^2$ curve learning theorem", we must introduce a variant of the ordinary Jones function, defined over shifted dyadic grids.

\begin{definition}[Shifted dyadic grids] Redefine the standard dyadic grid $\Delta(\RR^n)$ from above to be the collection of half-open dyadic cubes in $\RR^n$. For each $x\in \RR^n$, let $x+\Delta(\RR^n)$ denote the \emph{shifted dyadic grid} that is obtained by translating each cube in $\Delta(\RR^n)$ by $x$. Define $\allDelta(\RR^n)$ to be the union of the $2^n$ shifted grids $x+\Delta(\RR^n)$, $x\in\{0,1/3\}^n$.\end{definition}

\begin{definition}[$L^2$ Jones function, Lerman's variant] Let $\mu$ be a locally finite Borel measure on $\RR^n$. An \emph{$L^2$ best fit line} for $\mu$ in a cube $Q$ is any line $\ell_Q$ which achieves the minimum value of  $\int_Q\dist(x,\ell_Q)^2\,d\mu(x)$ among all lines in $\RR^n$. For every $Q\in\allDelta(\RR^n)$, define $\bhat_2(\mu,Q)$ by \begin{equation*} \bhat_2(\mu,Q):=\sup_{R}\sup_{\ell_R} \int_Q \left(\frac{\dist(x,\ell_R)}{\diam Q}\right)^2\frac{d\mu(x)}{\mu(Q)}\in[0,1]\end{equation*} where $R$ ranges over all cubes $R\in \allDelta(\RR^n)$ containing $Q$ such that \begin{equation*} 2^{j_0^*}\leq \frac{\side R}{\side Q} \leq 2^{j_1^*},\end{equation*} and $\ell_R$ ranges over all $L^2$ best fit lines for $\mu$ in $R$. Here $2\leq j_0^*\leq j_1^*$ are integer parameters. We define the \emph{modified $L^2$ Jones function} $\hJ_2(\mu,r,\cdot)$ for $\mu$  by \begin{equation*} \hJ_2(\mu,r,x)=\sum_{Q} \bhat_2(\mu,Q)^2 \chi_Q(x)\quad\text{for all }x\in\RR^n,\end{equation*} where $Q$ ranges over all cubes in $\allDelta(\RR^n)$ of side length at most $r>0$. We abbreviate the function $\hJ_2(\mu,1,\cdot)$ starting at scale 1 by $\hat J_2(\mu,\cdot)$.
\end{definition}

We now record Lerman's $L^2$ curve learning theorem. In the statement of the theorem, $\spt\mu=\{x\in\RR^n:\mu(B(x,r))>0$ for all $r>0\}$ denotes the support of $\mu$.

\begin{theorem}[{\cite[Theorem 4.8]{Lerman}}]\label{t:lerman}  Set parameters $j_0^*=2$ and $j_1^*= 2^{4+\log_2\lceil 6480e\sqrt{n}\rceil}$ ($2^{18}\sqrt{n}< j_1^*< 2^{19}\sqrt{n}$). There exist a constant $C=C(n)>1$ and an absolute constant $\lambda>1$ with the following property. If $\mu$ is a locally finite Borel measure on $\RR^n$, $Q_1\in\allDelta(\RR^n)$, and there exists $M>0$ such that \begin{equation*}\hJ_2(\mu,\lambda \side Q_1,x)\leq M\quad\text{for all }x\in\spt\mu\cap \lambda Q_1,\end{equation*} then there exists a rectifiable curve $\Gamma_1\subset \lambda Q_1$ such that $\Haus^1(\Gamma_1)\leq C e^{CM}\side Q_1$ and $\mu(\Gamma_1)\geq C^{-1}e^{-CM}\mu(Q_1)$.\end{theorem}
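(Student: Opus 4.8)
The plan is to reconstruct $\Gamma_1$ by the kind of multiscale stopping-time (corona) construction that Jones introduced for the Traveling Salesman Theorem and that Okikiolu, David--Semmes and Schul later refined, modified so that one recovers only a definite \emph{fraction} of the mass rather than the whole support --- this is precisely the slack that lets an $L^2$ (averaged) Jones quantity stand in for the $L^\infty$ quantity $\beta_E$. After rescaling, translating, and normalizing (the case $\mu(Q_1)=0$ being trivial) we may take $Q_1$ to be a unit cube of one of the $2^n$ shifted grids with $\mu(Q_1)=1$, so the target is a rectifiable curve $\Gamma_1\subset\lambda Q_1$ with $\Haus^1(\Gamma_1)\lesssim e^{CM}$ and $\mu(\Gamma_1)\gtrsim e^{-CM}$. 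For each generation $k\ge0$ let $\mathcal{D}_k$ consist of the cubes of $\allDelta(\RR^n)$ of side $2^{-k}$ meeting $\spt\mu\cap\lambda Q_1$ and fix in each an $L^2$ best fit line $\ell_Q$; the hypothesis says exactly that $\sum_Q\bhat_2(\mu,Q)^2\le M$ for every $x\in\spt\mu\cap\lambda Q_1$, the sum running over the $Q\in\allDelta(\RR^n)$ with $x\in Q$ and $\side Q\le\lambda\side Q_1$.

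First I would build a corona decomposition rooted at $Q_1$. Fixing small absolute thresholds, I descend from $Q_1$ and declare $Q$ a \emph{stopping cube} if it is the first cube on its branch along which one of the following exceeds its threshold: the accumulated sum $\sum\bhat_2(\mu,R)^2$; the total rotation of the best fit lines $\ell_R$; or the drop of the relative density $\mu(Q)/\side Q$ measured against $\mu(Q_1)/\side Q_1$. Let $\mathcal{S}$ be the resulting antichain of maximal stopping cubes and $\mathcal{F}$ the cubes strictly between $Q_1$ and $\mathcal{S}$. On $\mathcal{F}$ the $\bhat_2$'s are $\ell^2$-small and the lines barely tilt, so the standard ``$\beta$'s control the geometry'' estimates confine $\spt\mu$, over the part of $\lambda Q_1$ swept out by $\mathcal{F}$, to a thin, mildly curving tube about $\ell_{Q_1}$. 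Threading a connected polygon $\Gamma_0\subset\lambda Q_1$ scale by scale through a $2^{-k}$-net of $\spt\mu$-points then produces a Lipschitz graph over $\ell_{Q_1}$ with $\Haus^1(\Gamma_0)\lesssim\side Q_1$ (the excess bounded by the threshold plus $\sum_{\mathcal{F}}\bhat_2^2$) that captures a definite fraction of the $\mu$-mass not descending into $\mathcal{S}$; the same tube confinement gives $\sum_{Q_S\in\mathcal{S}}\side Q_S\lesssim\side Q_1$, since the disjoint cubes $Q_S$ hug a one-dimensional set.

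Next I would iterate this inside each $Q_S\in\mathcal{S}$, noting that scales $\le\lambda\side Q_S$ form a subfamily of scales $\le\lambda\side Q_1$, so $\hJ_2\le M$ descends to $Q_S$. The key point is that every stopping event can be charged against the budget $M$: a block of a branch ending in a stopping-by-$\bhat_2$ cube has consumed $\gtrsim$ a fixed amount of $\sum\bhat_2^2$, and --- via the geometric lemma bounding line rotation (and density variation) along a branch by $\bhat_2^2$-sums plus harmless boundary terms --- so have the stopping-by-rotation and stopping-by-density events. Hence along any branch there are only $O(M)$ stopping cubes, i.e. the recursion has depth $O(M)$. Gluing $\Gamma_0$ to the curves built in the $Q_S$'s adds length $\lesssim\sum_{\mathcal{S}}\side Q_S\lesssim\side Q_1$ and yields one connected curve in $\lambda Q_1$. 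Following the normalized length and the surviving mass fraction through the $O(M)$ levels --- each level multiplying the length by a bounded factor $\kappa\ge1$ (coming from $\sum_{\mathcal{S}}\side Q_S\approx\kappa\,\side Q_1$) and the recovered mass by a bounded factor $\le1$ --- and unwinding the resulting geometric-type recursions gives $\Haus^1(\Gamma_1)\lesssim e^{CM}\side Q_1$ and $\mu(\Gamma_1)\gtrsim e^{-CM}\mu(Q_1)$, which is the claim.

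I expect the main obstacle to be the one-scale geometric analysis on a corona region together with this accounting. One must show that $\ell^2$-smallness of the $\bhat_2(\mu,Q)$ --- which, unlike the single-cube $\beta_2$'s in the Traveling Salesman Theorem, are \emph{suprema over an entire window of ancestor scales} --- genuinely forces a Lipschitz-graph structure of length close to $\side Q_1$ and a stopping family with $\sum_{\mathcal{S}}\side Q_S\lesssim\side Q_1$, and that the two exponential factors compound as claimed (the bounds need not be sharp). This is exactly why the large explicit window $[2^{j_0^*},2^{j_1^*}]$ enters the definition of $\bhat_2$: it must be wide enough that $\bhat_2(\mu,Q)$ detects any nearby bad scale, so that $\bhat_2(\mu,Q)$ small is a robust certificate of flatness; propagating the dependence on $j_0^*$ and $j_1^*$ through the estimates is the calculation-heavy part.
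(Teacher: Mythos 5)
First, a point of bookkeeping: the paper does not prove this statement. Theorem \ref{t:lerman} is quoted from Lerman (Theorem 4.8 of \cite{Lerman}) and is used only for context in \S\ref{s:related}, so there is no in-paper argument to compare yours against; what you are proposing is a reconstruction of a substantial external result, not of anything established in this paper.

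As a sketch, your corona outline has a plausible general shape (stopping-time decomposition, Jones-type curve construction on each corona region, a recursion whose depth produces the factors $e^{\pm CM}$), but two load-bearing claims are wrong as stated. (a) You assert that stopping-by-density events can be ``charged against the budget $M$'' via a lemma bounding density variation along a branch by $\sum\bhat_2^2$. No such lemma can hold: a measure supported on a single line has $\bhat_2(\mu,Q)=0$ for every $Q$, yet its density ratios $\mu(Q)/\side Q$ can oscillate arbitrarily, so density-drop stopping cubes consume none of the budget and your ``depth $O(M)$'' count fails for them. The standard repair is different: one does not recurse into low-density stopping cubes but discards them, showing their total mass is a small fraction of $\mu(Q_1)$ via $\mu(Q_S)\le\varepsilon(\side Q_S)\,\mu(Q_1)/\side Q_1$ together with a packing bound $\sum_{\mathcal S}\side Q_S\lesssim\side Q_1$ --- this is exactly the slack that makes the conclusion only a \emph{fraction} of the mass. (b) That packing bound is itself not justified the way you claim: disjoint dyadic cubes of varying scales that all meet a unit segment have unbounded $\sum\side Q_S$ in general (about $2^{k}$ of them at scale $2^{-k}$, contributing a fixed amount per generation), so ``the disjoint cubes hug a one-dimensional set'' proves nothing. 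What does work is that the density stopping condition guarantees $\mu(Q)/\side Q\gtrsim\varepsilon\,\mu(Q_1)/\side Q_1$ for every cube strictly above the stopping time, hence up to a bounded factor for each $Q_S$, and then disjointness of the $Q_S$ gives $\sum\side Q_S\lesssim\varepsilon^{-1}\side Q_1$. Finally, the one-scale analysis --- that $\ell^2$-smallness of the $\bhat_2$'s over the ancestor window $[2^{j_0^*},2^{j_1^*}]$ forces a Lipschitz-graph structure capturing a definite mass fraction --- is the technical core of Lerman's paper and cannot be waved through as standard; as written, your text is an outline of a strategy rather than a proof.
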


\begin{remark}\label{r:iterate} By iterating Theorem \ref{t:lerman}, one can show that uniform control on $\hJ_2(\mu,\cdot)$ implies that $\mu$ is 1-rectifiable. Let us describe the basic strategy.  Suppose that $\mu$ is a finite Borel measure supported on $Q_0=(0,1]^n$ with $\hat J(\mu,x)\leq M$ for all $x\in Q_0\cap\spt\mu$. Invoking Lerman's theorem once, we find a rectifiable curve $\Gamma_0$ that charges a proportion of the $\mu$ mass in $Q_0$. Next divide $Q_0\setminus \Gamma_0$ into Whitney cubes $(T_i)_{i=1}^\infty$, and invoke Lerman's theorem again on each cube $T_i$. This yields a countable family of rectifiable curves $(\Gamma_i)_{i=1}^\infty$, whose union charges a proportion of the $\mu$ mass in $Q_0\setminus \Gamma_0$. To continue, divide $Q_0\setminus \bigcup_{i=0}^\infty\Gamma_i$ into Whitney cubes\dots and so on. This  procedure yields a countable family of rectifiable curves which fully charge the mass of $\mu$. \end{remark}

Unfortunately, the proof strategy described in Remark \ref{r:iterate} cannot be used to show that ``$\hJ_2(\mu,x)<\infty$ at $\mu$-a.e.~ $x\in\RR^n$ implies $\mu$ is 1-rectifiable". On the other hand, this claim could be proved if one possessed a ``density version" of Lerman's theorem.

\begin{conjecture} \label{c:density} A version of Theorem \ref{t:lerman} holds with the hypothesis ``$\hJ_2(\mu,x)\leq M$ for all $x\in \lambda Q_1\cap \spt\mu$" replaced by  \begin{quotation}``the set $A=\{x\in \lambda Q_1: \hJ_2(\mu,x)\leq \varepsilon\}$ satisfies $\mu(A)\geq \delta\mu(\lambda Q_1)$ for some $\varepsilon\leq\varepsilon_0(n)$ and $\delta\geq \delta_0(n,\varepsilon)$"\end{quotation} and the conclusion ``$\mu(\Gamma_1)\geq C^{-1}e^{-CM}\mu(Q_1)$" replaced by \begin{quotation}``$\mu(A\cap \Gamma_1)\geq C^{-1}e^{-C\varepsilon}\mu(Q_1)$ where $C=C(n,\delta)$".\end{quotation}\end{conjecture}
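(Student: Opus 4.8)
The conjecture asks for a positive-density variant of Lerman's curve-learning theorem, so the natural plan is to re-run the proof of Theorem~\ref{t:lerman} while tracking the restriction $\mu\res A$ of $\mu$ to the good set $A=\{x\in\lambda Q_1:\hJ_2(\mu,x)\le\varepsilon\}$, building the curve only out of cubes in which $A$ is abundant. Fix once and for all a tolerance $\rho\in(0,\tfrac12)$ (say $\rho=\tfrac12$), and take $\delta_0(n,\varepsilon)$ close enough to $1$ that $1-\delta_0<\rho/4$. \textbf{Step 1 (localized Carleson estimate).} For each $x\in A$ the bound $\hJ_2(\mu,x)\le\varepsilon$ unwinds, up to the finite-scale truncation that is harmless by Lemma~\ref{l:truncate}, to $\sum_Q\bhat_2(\mu,Q)^2\chi_Q(x)\le\varepsilon$, the sum over $Q\in\allDelta(\RR^n)$ in the relevant range of scales; integrating against $d(\mu\res A)$ and interchanging sums gives the packing bound $\sum_Q\bhat_2(\mu,Q)^2\,\mu(Q\cap A)\le\varepsilon\,\mu(A)$. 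Two consequences will be used: every cube $R$ in the range that meets $A$ has $\bhat_2(\mu,R)^2\le\varepsilon$, so $\mu\res R$ is $O(\sqrt\varepsilon)$-flat about the best fit lines of the slightly larger ancestors of $R$ entering the definition of $\bhat_2$; and whenever $\mu(Q\cap A)\ge(1-\rho)\mu(Q)$, flatness of $\mu$ in $Q$ about a line transfers, with the loss of a factor $(1-\rho)^{-1}\le 2$, to flatness of $\mu\res(A\cap Q)$, so on such cubes the best fit lines for $\mu$ and for $\mu\res A$ are comparable approximants.

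\textbf{Step 2 (the $A$-full subtree).} Call a dyadic cube $Q\subset\lambda Q_1$ \emph{$A$-full} if $\mu(Q\cap A)\ge(1-\rho)\mu(Q)$; since $\delta\ge 1-\rho$, the cube $\lambda Q_1$ and the admissible top-scale cubes just inside it are $A$-full. Let $\mathcal{S}$ be the family of maximal dyadic subcubes of $\lambda Q_1$ that are \emph{not} $A$-full. The cubes of $\mathcal{S}$ are pairwise disjoint, and since each $S\in\mathcal{S}$ has $\mu(S\setminus A)>\rho\,\mu(S)$ we obtain $\rho\sum_{S\in\mathcal{S}}\mu(S)<\mu(\lambda Q_1\setminus A)\le(1-\delta)\mu(\lambda Q_1)$, hence $\sum_{S\in\mathcal{S}}\mu(S\cap A)\le\sum_{S\in\mathcal{S}}\mu(S)<\rho^{-1}(1-\delta)\mu(\lambda Q_1)$: the mass of $\mu\res A$ carried inside stopping cubes is a tiny fraction of $\mu(\lambda Q_1)$ when $\delta$ is close to $1$. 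Call $Q$ an \emph{$A$-full tree cube} if $Q$ and every ancestor of $Q$ up to the top scale are $A$-full; by Step~1, on every $A$-full tree cube the best fit lines of $\mu$ are also good $L^2$ approximants of $\mu\res A$, and for any $x\in A$ the numbers $\bhat_2(\mu,\cdot)^2$ of the $A$-full tree cubes containing $x$ sum to at most $\varepsilon$.

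\textbf{Step 3 (Lerman's construction on the tree, and conclusion).} Re-run the proof of Theorem~\ref{t:lerman} with the measure $\mu\res A$, processing only $A$-full tree cubes and using their best fit lines, and adding the stopping rule: terminate a branch as soon as it reaches a cube of $\mathcal{S}$, into which the curve need not enter. Along any branch the relevant $\bhat_2(\mu,\cdot)^2$ sum is at most $\varepsilon$, so (after the factor $(1-\rho)^{-1}$ from Step~1) Lerman's length iteration still produces $\Haus^1(\Gamma_1)\le C e^{C\varepsilon}\side Q_1$, and his mass estimate still shows that $\Gamma_1$ carries at least $C^{-1}e^{-C\varepsilon}$ times the $(\mu\res A)$-mass that is not routed into a stopped cube, namely $\mu(A)-\sum_{S\in\mathcal{S}}\mu(S\cap A)$. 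By Step~2 this is at least $\bigl(\delta-\rho^{-1}(1-\delta)\bigr)\mu(\lambda Q_1)\ge\tfrac{\delta}{2}\mu(Q_1)$, whence $\mu(A\cap\Gamma_1)\ge C^{-1}e^{-C\varepsilon}\mu(Q_1)$ with $C=C(n,\delta)$, as asserted.

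\textbf{Main obstacle.} The serious difficulty --- and the reason this remains a conjecture --- is Step~3: one cannot invoke Theorem~\ref{t:lerman} as a black box, because $A$ may be locally very sparse in cubes where it is globally abundant, so $\hJ_2(\mu\res A,\cdot)$ need not be uniformly small on the support of $\mu\res A$; this is exactly why one must re-enter Lerman's proof and install the stopping rule ``terminate branches leaving the $A$-full tree''. One then has to verify that every ingredient of Lerman's multiscale construction --- the nested stopping-time families, the selection of best fit lines over the shifted-grid ancestor window $[2^{j_0^*},2^{j_1^*}]$, the coercivity estimates that prevent the approximating polygons from degenerating, and the iteration responsible for the $e^{CM}$ constants --- survives restriction to a subtree, and that the only extra mass the modified construction forfeits is the leaf mass $\mu(\bigcup\mathcal{S})$ controlled in Step~2. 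A secondary, more routine task is to reconcile the ancestor window of $\bhat_2$ with the top scale $\lambda\side Q_1$ and the requirement $\Gamma_1\subset\lambda Q_1$, and to pin down the admissible $\delta_0(n,\varepsilon)$ (the argument above suggests an absolute threshold suffices, though a construction more attuned to the geometry of the approximating curves might allow $\delta_0$ small).
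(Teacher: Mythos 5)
This statement is Conjecture \ref{c:density}: the paper contains no proof of it. The authors say only that they ``believe it should be possible to verify [it] by rerunning the arguments used by Lerman to prove Theorem \ref{t:lerman}, but we have not checked the details.'' Your proposal is precisely that suggested strategy, fleshed out with two genuine and correct preliminary reductions: the Chebyshev/Carleson packing bound $\sum_Q\bhat_2(\mu,Q)^2\mu(Q\cap A)\le\varepsilon\mu(A)$ together with the pointwise bound $\bhat_2(\mu,R)^2\le\varepsilon$ on cubes meeting $A$ (Step 1), and the stopping-time estimate $\sum_{S\in\mathcal{S}}\mu(S)\le\rho^{-1}(1-\delta)\mu(\lambda Q_1)$ for the maximal non-$A$-full cubes (Step 2). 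Both are sound.

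However, the proposal is a plan, not a proof, and you identify the gap yourself: Step 3 simply asserts that Lerman's multiscale construction, restricted to the $A$-full subtree with your stopping rule, still yields $\Haus^1(\Gamma_1)\le Ce^{C\varepsilon}\side Q_1$ and still captures the non-stopped part of the $(\mu\res A)$-mass. That assertion is the entire content of the conjecture; nothing in your write-up (or in the paper) verifies that the ingredients of \cite[Theorem 4.8]{Lerman} --- the best-fit lines taken over the ancestor window $[2^{j_0^*},2^{j_1^*}]$ across all $2^n$ shifted grids (note that ``$A$-full tree cube'' is a grid-dependent notion, so the windows for different grids need not stay inside the tree simultaneously), the coercivity estimates for the approximating polygons, and the exponential length iteration --- survive restriction to a subtree, nor that the only forfeited mass is the leaf mass $\sum_{S\in\mathcal{S}}\mu(S\cap A)$. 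Two further caveats: your argument forces $\delta_0$ to be close to $1$ (you need $1-\delta_0<\rho/4$), whereas the conjecture's constant $C=C(n,\delta)$ indicates that arbitrary thresholds $\delta_0>0$ are intended; and the reconciliation of the truncation scale of $\hJ_2$ with the window of $\bhat_2$ and the containment $\Gamma_1\subset\lambda Q_1$, which you defer as ``routine,'' is part of what the authors say they have not checked. So the proposal matches the paper's intended route but does not close the problem.
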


\vspace{.05cm}

We believe it should be possible to verify Conjecture \ref{c:density} by rerunning the arguments used by Lerman to prove Theorem \ref{t:lerman}, but we have not checked the details.

Finally, we wish to mention a recent paper by Tolsa \cite{Tolsa}, which introduced the use of tools from the theory of mass transportation to the theory of quantitative rectifiability. In particular, Tolsa established a new characterization of uniformly rectifiable measures, expressed in terms of the $L^2$ Wasserstein distance $W_2(\cdot,\cdot)$ between probability measures. It would be interesting to know to what extent can these new tools be used to study the rectifiability of measures without an \emph{a priori} assumption of Ahlfors regularity.

\bibliography{1rect-n-refs}{}
\bibliographystyle{amsalpha}

\end{document}